 \newtheorem{theorem}{Theorem}[section]
 \newtheorem{corollary}[theorem]{Corollary}
 \newtheorem{lemma}[theorem]{Lemma}
 \theoremstyle{definition}
 \theoremstyle{remark}
 \newtheorem{remark}[theorem]{Remark}
  \numberwithin{equation}{section}
\renewcommand{\phi}{\varphi}
\renewcommand{\theta}{\vartheta}
\DeclareMathOperator{\tform}{\mathfrak{t}}
\DeclareMathOperator{\wform}{\mathfrak{w}}
\DeclarePairedDelimiterX\sipt[2]{(}{)_{\tform}}{#1\,\delimsize\vert\,#2}
\DeclarePairedDelimiterX\sipv[2]{(}{)_{v}}{#1\,\delimsize\vert\,#2}
\DeclarePairedDelimiterX\sipw[2]{(}{)_{w}}{#1\,\delimsize\vert\,#2}
\newcommand{\pair}[2]{\left(\begin{array}{c}\!\!#1\!\!\\ \!\!#2\!\!\end{array}\right)}
\newcommand{\dupN}{\mathbb{N}}
\newcommand{\seq}[1]{(#1_{n})_{n\in\dupN}}
\newcommand{\dupR}{\mathbb{R}}
\newcommand{\dom}{\operatorname{dom}}
\newcommand{\ran}{\operatorname{ran}}
\newcommand{\D}{\mathscr{D}}
\newcommand{\hil}{H}
\newcommand{\lil}{L}
\newcommand{\kil}{K}
\DeclarePairedDelimiterX\sip[2]{(}{)}{#1\,\delimsize\vert\,#2}
\DeclarePairedDelimiterX\siptilde[2]{(}{)_{\!_{\widetilde{A}}}}{#1\,\delimsize\vert\,#2}
\DeclarePairedDelimiterX\sipf[2]{(}{)_{f}}{#1\,\delimsize\vert\,#2}
\DeclarePairedDelimiterX\sipg[2]{(}{)_{g}}{#1\,\delimsize\vert\,#2}
\DeclarePairedDelimiterX\siptw[2]{(}{)_{\tform+\wform}}{#1\,\delimsize\vert\,#2}
\DeclarePairedDelimiterX\set[2]{\{}{\}}{#1\,\delimsize\vert\,#2}
\DeclarePairedDelimiterX\dual[2]{\langle}{\rangle}{#1,#2}
\DeclarePairedDelimiterX\sipa[2]{(}{)_{\!_A}}{#1\,\delimsize\vert\,#2}
\DeclarePairedDelimiterX\sipc[2]{(}{)_{\!_C}}{#1\,\delimsize\vert\,#2}
\DeclarePairedDelimiterX\sipab[2]{(}{)_{\!_{A+B}}}{#1\,\delimsize\vert\,#2}
\DeclarePairedDelimiterX\sipb[2]{(}{)_{\!_B}}{#1\,\delimsize\vert\,#2}
\newcommand{\operator}[2]{\left(\begin{array}{cc}\!\! I &  #1\!\!\\ \!\! #2& I\!\!\end{array}\right)}
\newcommand{\coperator}[2]{\left(\begin{array}{cc}\!\! cI &  #1\!\!\\ \!\! #2& cI\!\!\end{array}\right)}
\newcommand{\boperator}[2]{\left(\begin{array}{cc}\!\! bI &  #1\!\!\\ \!\! #2& bI\!\!\end{array}\right)}
\begin{document}
\title{Adjoint of sums and products of operators in Hilbert spaces}

\author[Z. Sebesty\'en]{Zolt\'an Sebesty\'en}

\address{%
Department of Applied Analysis,\\ E\"otv\"os L. University,\\ P\'azm\'any P\'eter s\'et\'any 1/c.,\\ Budapest H-1117,\\ Hungary}

\email{sebesty@cs.elte.hu}

\author[Zs. Tarcsay]{Zsigmond Tarcsay}

\address{%
Department of Applied Analysis,\\ E\"otv\"os L. University,\\ P\'azm\'any P\'eter s\'et\'any 1/c.,\\ Budapest H-1117,\\ Hungary}

\email{tarcsay@cs.elte.hu}

\subjclass{Primary 47A05, 47A55, 47B25}

\keywords{Closed operator, adjoint, selfadjoint operator, operator product, operator sum, perturbation}


\begin{abstract}
We provide sufficient and necessary conditions guaranteeing  equations $(A+B)^*=A^*+B^*$ and $(AB)^*=B^*A^*$ concerning densely defined unbounded operators $A,B$ between Hilbert spaces. We also improve the perturbation theory of selfadjoint and essentially selfadjoint operators due to Nelson, Kato, Rellich, and W\"ust. Our method involves the range of two-by-two matrices of the form $M_{S,T}=\operator{-T}{S}$ that makes it possible to treat real and complex Hilbert spaces jointly. 
\end{abstract}

\maketitle

\section{Introduction}
Let $\hil$ and $\kil$ be real or complex  Hilbert spaces. A linear operator $A$ from $\hil$ to $\kil$ is called closed if its graph
\begin{equation*}
G(A)=\set{(x, Ax)}{x\in\dom A},
\end{equation*}
is a closed linear subspace of the product Hilbert space $\hil\times\kil$. Furthermore $A$ is called closable if $\overline{G(A)}$ is the graph of an operator. As it is well known, the fact that $A,B$ are closed does not imply that so are $A+B$ and $AB$. More precisely, it is not difficult to give examples of closed operators $A$ and $B$ such that $A+B$ and $AB$ are not even closable (provided they exist at all). It also cannot be expected that the relations
\begin{align}\label{E:ABA+B}
(AB)^*=B^*A^* \qquad \textrm{and} \qquad (A+B)^*=A^*+B^*
\end{align}
hold in general. There are however various conditions on $A,B$ guaranteeing \eqref{E:ABA+B}. For example, a delicate condition for $(AB)^*=B^*A^*$ is that $A$ be bounded or $B$ admit bounded inverse (for proofs see eg. \cite{Birman, Weidmann}). Identity in \eqref{E:ABA+B} concerning the sum is valid if any of the summands is bounded. 

Stronger results can be gained involving the concept of $A$-bounded\-ness (see \cite{HessKato}): an operator $B$ is called $A$-bounded if $\dom A\subseteq \dom B$ and 
\begin{equation}\label{E:A-bounded}
\|Bx\|^2\leq a\|Ax\|^2+b\|x\|^2,\qquad \textrm{for all $x\in\dom A$},
\end{equation}
see eg. \cite{Birman, KATO, Weidmann}. The infimum of all $a\geq0$ for which $b\geq0$ with property \eqref{E:A-bounded} exists is called the $A$-bound of $B$. $A$-boundedness plays a special role also in the perturbation theory of selfadjoint operators. One of the most significant contributions in this direction is the following classical result due to Kato \cite{KATO} and Rellich \cite{rellich}: If $A$ is selfadjoint and $B$ is symmetric and $A$-bounded with $A$-bound less than one then the sum $A+B$ still remains selfadjoint. 

Various extensions of the Kato--Rellich theorem can be found in the literature, see e.g. \cite{Birman, Jorgensen, KATO, Putnam, Weidmann, Wüst}. Our purpose in this paper is to develop a two-by-two operator matrix technique to gain new characterizations of closed, selfadjoint and essentially selfadjoint operators and  to improve the perturbation theory of these operator classes.  We also underline the fact that we do not restrict ourselves to complex Hilbert spaces. In fact, the method we use throughout makes us able to  treat the real and complex cases jointly.

The main tool of our approach are the range of (unbounded) operator matrices  of type $M_{S,T}:=\operator{-T}{S}$. More precisely, considering two operators $S:\hil\to \kil$ and  $T:\kil\to \hil$, the mapping $M_{S,T}$ is defined to be the operator acting on $\hil\times\kil$ with domain $\dom M_{S,T}:=\dom S\times\dom T$, determined by the correspondence
\begin{equation*}
\pair{x}{y}\mapsto \pair{x-Ty}{Sx+y},\qquad x\in\dom S, y\in\dom T. 
\end{equation*}
It turns out that closedness, selfadjointness and essentially selfadjointness of an operator is in a very close connection with some topological properties of the range of $M_{S,T}$, with certain $S,T$. Hence, perturbation type problems of such kind operators can be transferred to perturbations of the range of the matrices $M_{S,T}$. 
\section{Closedness of sums and products}
In our first result we  offer some necessary and sufficient conditions for the operator equation $(A+B)^*=A^*+B^*$ in terms of the operator matrix $M_{A+B,A^*+B^*}$. Observe that no assumptions on closedness of the operators under consideration are made and also the density of $\dom A\cap \dom B$ is omitted from the hypotheses.

\begin{theorem}\label{T:sum}
Let $A,B$ be densely defined operators between two Hilbert spaces $\hil,\kil.$ The following statements are equivalent:
\begin{enumerate}[\upshape (i)]
\item $\dom (A+B)$ is dense in $\hil$ and $(A+B)^*=A^*+B^*$;
\item $G(A+B)^{\perp}\subseteq \ran M_{A+B,A^*+B^*}$
\end{enumerate}
\end{theorem}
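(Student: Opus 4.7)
The plan rests on the standard description of a graph's orthogonal complement: for an arbitrary operator $T\colon \hil\to\kil$, one has $(u,v)\in G(T)^{\perp}$ if and only if $\sip{u}{x}+\sip{v}{Tx}=0$ for every $x\in\dom T$. When $T$ is densely defined this collapses to $G(T)^{\perp}=\set{(-T^{*}v,v)}{v\in\dom T^{*}}$; in general, every $n\in\hil$ with $n\perp\dom T$ yields $(n,0)\in G(T)^{\perp}$.

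The implication (i)$\Rightarrow$(ii) is immediate. Given $(u,v)\in G(A+B)^{\perp}$, hypothesis (i) forces $v\in\dom(A^{*}+B^{*})$ with $(A^{*}+B^{*})v=-u$; setting $x=0$ and $y=v$ in $M_{A+B,A^{*}+B^{*}}$ produces the output $(-(A^{*}+B^{*})v,v)=(u,v)$, as required.

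For (ii)$\Rightarrow$(i) I would first establish density of $\dom(A+B)$. If $n\perp\dom(A+B)$, then $(n,0)\in G(A+B)^{\perp}$, so by (ii) there exist $x\in\dom(A+B)$ and $y\in\dom(A^{*}+B^{*})$ with $(A+B)x+y=0$ and $x-(A^{*}+B^{*})y=n$. Eliminating $y$, pairing with $x$, and using $\sip{n}{x}=0$ together with the adjoint identity $\sip{(A^{*}+B^{*})(A+B)x}{x}=\|(A+B)x\|^{2}$, one reaches $\|x\|^{2}+\|(A+B)x\|^{2}=0$, forcing $n=0$. Once density is secured the inclusion $A^{*}+B^{*}\subseteq(A+B)^{*}$ is routine from the definition of the adjoint. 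For the reverse inclusion I would take $z\in\dom(A+B)^{*}$, note that $(-(A+B)^{*}z,z)\in G(A+B)^{\perp}$, and apply (ii) to secure $x\in\dom(A+B)$ and $y\in\dom(A^{*}+B^{*})$ with $(A+B)x+y=z$ and $x-(A^{*}+B^{*})y=-(A+B)^{*}z$. Pairing the second equation with $x$ (using $\sip{(A+B)^{*}z}{x}=\sip{z}{(A+B)x}$ and $\sip{(A^{*}+B^{*})y}{x}=\sip{y}{(A+B)x}$) and substituting $(A+B)x=z-y$ from the first equation, the two relations collapse into $\|x\|^{2}+\|z-y\|^{2}=0$. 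Hence $x=0$ and $y=z$; the first equation then delivers $z\in\dom(A^{*}+B^{*})$ and $(A^{*}+B^{*})z=(A+B)^{*}z$.

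The single non-routine step is noticing that the pair of scalar equations furnished by $(u,v)\in\ran M_{A+B,A^{*}+B^{*}}$, once paired with $x$ through the adjoint, always combine into a sum of two squares forced to vanish. Everything else is bookkeeping around the definitions of the adjoint and of $G(A+B)^{\perp}$.
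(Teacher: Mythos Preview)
Your proposal is correct and follows essentially the same argument as the paper: both directions rely on writing $G(A+B)^{\perp}$ via the adjoint, reading off the two scalar equations from membership in $\ran M_{A+B,A^*+B^*}$, and pairing with $x$ to force $\|x\|^{2}+\|(A+B)x\|^{2}=0$. Your final identity $\|x\|^{2}+\|z-y\|^{2}=0$ is just the paper's $\|x\|^{2}+\|(A+B)x\|^{2}=0$ rewritten via $(A+B)x=z-y$.
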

\begin{proof}
Assume first that $A+B$ is densely defined and  $(A+B)^*=A^*+B^*$. Then we conclude that 
\begin{align*}
G(A+B)^{\perp}&=\set{(-(A+B)^*v,v)}{v\in \dom (A+B)^*}\\
              &=\set{(-(A^*+B^*)v,v)}{v\in \dom (A^*+B^*)}\\   
              &=\set[\bigg]{\operator{-(A^*+B^*)}{A+B}\pair{0}{v}}{v\in \dom (A^*+B^*)}\\
              &\subseteq \ran M_{A+B,A^*+B^*},
\end{align*}
which proves (ii). Conversely, assume that (ii) and prove first that $A+B$ is densely defined. To this end, let $u\in\dom(A+B)^{\perp}$ and observe immediately that $(u,0)\in G(A+B)^{\perp}$. Hence there exist $x\in\dom (A+B)$ and $v\in\dom (A^*+B^*)$ such that 
\begin{equation*}
\operator{-(A^*+B^*)}{A+B}\pair{x}{v}=\pair{u}{0}.
\end{equation*}
This yields
\begin{align*}
x-(A^*+B^*)v=u\qquad \textrm{and}\qquad (A+B)x+v=0,
\end{align*}
whence $-(A+B)x=v\in\dom (A^*+B^*)$ and $x+(A^*+B^*)(A+B)x=u$. Consequently we see that 
\begin{align*}
0&=\sip{u}{x}=\sip{x}{x}+\sip{(A^*+B^*)(A+B)x}{x}\\
 &=\sip{x}{x}+\sip{(A+B)x}{(A+B)x},
\end{align*}
whence $x=0$, and thus $u=0$, as claimed. To conclude that $(A+B)^*=A^*+B^*$ it suffices to show that $\dom (A+B)^*\subseteq \dom (A^*+B^*)$. Consider therefore $y\in\dom (A+B)^*$ and observe that $(-(A+B)^*y,y)\in G(A+B)^{\perp}$. According to (ii) we may choose  $x\in\dom (A+B)$ and $v\in\dom (A^*+B^*)$ such that 
\begin{equation*}
\operator{-(A^*+B^*)}{A+B}\pair{x}{v}=\pair{-(A+B)^*y}{y}.
\end{equation*}
This yields then 
\begin{align*}
x-(A^*+B^*)v=-(A+B)^*y\qquad \textrm{and}\qquad (A+B)x+v=y,
\end{align*}
hence 
\begin{align*}
0&=\sip{(x,(A+B)x)}{(-(A+B)^*y,y)}\\
 &=\sip{(x,(A+B)x)}{(x-(A^*+B^*)v,(A+B)x+v)}\\
 &=\sip{x}{x}-\sip{x}{(A^*+B^*)v}+\sip{(A+B)x}{v}+\sip{(A+B)x}{(A+B)x}\\
 &=\sip{x}{x}+\sip{(A+B)x}{(A+B)x}.
\end{align*}
Consequently, $x=0$ and therefore $y=v\in\dom(A^*+B^*)$.
\end{proof}
Repeating the above reasoning with minor modifications we can also furnish necessary and sufficient conditions the for identity $(AB)^*=B^*A^*$:

\begin{theorem}
Let $\hil,\kil,\lil$ be Hilbert spaces and let $A,B$ be linear operators from $\kil$ to $\lil$ and $\hil$ to $\kil$, respectively. The following assertions are equivalent:
\begin{enumerate}[\upshape (i)]
\item $\dom (AB)$ is dense in $\hil$ and $(AB)^*=B^*A^*$;
\item $G(AB)^{\perp}\subseteq \ran M_{AB,B^*A^*}$.
\end{enumerate}
\end{theorem}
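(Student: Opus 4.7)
The plan is to mirror the proof of Theorem~\ref{T:sum} essentially line by line, with the matrix $M_{A+B,A^*+B^*}$ (acting on $\hil\times\kil$) replaced by $M_{AB,B^*A^*}$ (acting on $\hil\times\lil$ via $\pair{x}{y}\mapsto\pair{x-B^*A^*y}{ABx+y}$). The implication (i)$\Rightarrow$(ii) is immediate in the same fashion as before: under the hypothesis $(AB)^*=B^*A^*$, every element of $G(AB)^{\perp}$ has the form $(-B^*A^*v,v)=M_{AB,B^*A^*}\pair{0}{v}$ with $v\in\dom(B^*A^*)$, hence belongs to $\ran M_{AB,B^*A^*}$.

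For (ii)$\Rightarrow$(i) I would proceed in the same two stages. First, to establish that $\dom(AB)$ is dense in $\hil$, I pick $u\in\dom(AB)^{\perp}$, note that $(u,0)\in G(AB)^{\perp}$, and use (ii) to obtain $x\in\dom(AB)$ and $v\in\dom(B^*A^*)$ with $x-B^*A^*v=u$ and $ABx+v=0$. Substituting $v=-ABx$ forces $ABx\in\dom A^*$ and $A^*(ABx)\in\dom B^*$, and the identity $x+B^*A^*(AB)x=u$ remains. Pairing with $x$ and telescoping the adjoint relation through $\kil$ gives $\sip{x}{x}+\sip{ABx}{ABx}=\sip{u}{x}=0$, so $x=0$ and therefore $u=0$.

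Second, to prove $\dom(AB)^*\subseteq\dom(B^*A^*)$, take $y\in\dom(AB)^*$, so that $(-(AB)^*y,y)\in G(AB)^{\perp}$, and use (ii) to select $x\in\dom(AB)$, $v\in\dom(B^*A^*)$ with $x-B^*A^*v=-(AB)^*y$ and $ABx+v=y$. Exploiting the orthogonality of $(x,ABx)\in G(AB)$ to $(-(AB)^*y,y)\in G(AB)^{\perp}$ and expanding,
\begin{align*}
0&=\sip{x}{x-B^*A^*v}+\sip{ABx}{ABx+v}\\
 &=\sip{x}{x}+\sip{ABx}{ABx}-\sip{x}{B^*A^*v}+\sip{ABx}{v}\\
 &=\sip{x}{x}+\sip{ABx}{ABx},
\end{align*}
the last equality using the two-step adjoint identity $\sip{x}{B^*A^*v}=\sip{Bx}{A^*v}=\sip{ABx}{v}$. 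Consequently $x=0$, and then $y=v\in\dom(B^*A^*)$.

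The only genuinely new bookkeeping compared with Theorem~\ref{T:sum} is that the decisive cancellation now rests on this two-step adjoint identity, which is legitimate only because the full domain chain $x\in\dom B$, $Bx\in\dom A$, $v\in\dom A^*$, $A^*v\in\dom B^*$ is available. These conditions are exactly what is encoded in $\dom(AB)$ and $\dom(B^*A^*)$, so they come for free; I expect this to be the sole spot requiring a cautious verification, with the rest of the argument proceeding routinely in parallel with the additive case.
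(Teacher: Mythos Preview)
Your proposal is correct and follows essentially the same argument as the paper's own proof: both directions are handled identically, including the use of $(u,0)\in G(AB)^{\perp}$ to get density of $\dom(AB)$ and the orthogonality of $(x,ABx)$ against $(-(AB)^*y,y)$ to obtain the domain inclusion $\dom(AB)^*\subseteq\dom(B^*A^*)$. The only difference is cosmetic (you name the element of $\dom(AB)^*$ as $y$ rather than $z$), and your explicit remark about the two-step adjoint identity $\sip{x}{B^*A^*v}=\sip{Bx}{A^*v}=\sip{ABx}{v}$ simply makes visible the cancellation that the paper performs in one line.
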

\begin{proof}
Assume first that $AB$ is densely defined and  $(AB)^*=B^*A^*$. Then
\begin{align*}
G(AB)^{\perp}&=\set{(-(AB)^*v,v)}{v\in \dom (AB)^*}\\
              &=\set{(-B^*A^*)v,v)}{v\in \dom (B^*A^*)}\\   
              &=\set[\bigg]{\operator{-B^*A^*}{AB}\pair{0}{v}}{v\in \dom (B^*A^*)}\\
              &\subseteq \ran M_{AB,B^*A^*},
\end{align*}
hence (i) implies (ii). Assume conversely that (ii) holds true. We are going to prove first that $AB$ is densely defined. Consider $u\in\dom (AB)^{\perp}$ and observe that $(u,0)\in G(AB)^{\perp}$ and therefore that 
\begin{equation*}
\operator{-B^*A^*}{AB}\pair{x}{v}=\pair{u}{0},
\end{equation*}
for some $x\in\dom (AB)$, $v\in\dom (B^*A^*)$. This yields then 
\begin{align*}
x-(B^*A^*)v=u\qquad \textrm{and}\qquad (AB)x+v=0,
\end{align*}
whence $-ABx=v\in\dom(B^*A^*)$ and $u=x+B^*A^*ABx$. Consequently,
\begin{align*}
0=\sip{u}{x}=\sip{x}{x}+\sip{B^*A^*ABx}{x}=\sip{x}{x}+\sip{ABx}{ABx},
\end{align*}
which gives $x=0$ and thus $u=0$, as it is claimed. We see therefore that $B^*A^*\subseteq (AB)^*$. It suffices therefore to prove that $\dom (AB)^*\subseteq \dom (B^*A^*)$. Consider $z\in\dom (AB)^*$ and observe that $(-(AB)^*z,z)\in G(AB)^{\perp}$. According to assertion (ii) we may choose $x\in\dom (AB)$ and $v\in\dom (B^*A^*)$ such that  
\begin{align*}
\pair{-(AB)^*z}{z}=\operator{-B^*A^*}{AB}\pair{x}{v}.
\end{align*}
This yields then 
\begin{align*}
x-B^*A^*v=-(AB)^*z\qquad \textrm{and}\qquad ABx+v=z,
\end{align*}
and hence 
\begin{align*}
0&=\sip{(x,ABx)}{(-(AB)^*z,z)}\\
 &=\sip{(x,ABx)}{(x-B^*A^*v,ABx+v)}\\
 &=\sip{x}{x}-\sip{x}{B^*A^*v}+\sip{ABx}{v}+\sip{ABx}{ABx}\\
 &=\sip{x}{x}+\sip{ABx}{ABx}.
\end{align*}
Consequently, $x=0$ and therefore $z=v\in\dom(B^*A^*)$.
\end{proof}
The next result improves \cite[Proposition 2.3]{Popovici} characterizing closedness of operators by means of the range of the operator matrix $M_{A,A^*}$. 
\begin{theorem}\label{T:closed}
Let $A$ be a densely defined linear operator between two Hilbert spaces $\hil$ and $\kil$. Then the following statements are equivalent:
\begin{enumerate}[\upshape (i)]
\item $A$ is closed.
\item $\ran M_{A,A^*}=\hil\times\kil$.
\item $\set{(-A^*z,z)}{z\in\dom A^*}^{\perp}\subseteq\ran M_{A,A^*}$.
\end{enumerate}
\end{theorem}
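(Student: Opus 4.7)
The plan is to exploit the standard identification $G(A)^{\perp}=\set{(-A^*z,z)}{z\in\dom A^*}$ (valid since $A$ is densely defined) together with the cyclic implications (i)$\Rightarrow$(ii)$\Rightarrow$(iii)$\Rightarrow$(i). Under this identification, condition (iii) reads simply $\overline{G(A)}\subseteq\ran M_{A,A^*}$, which should make the last implication transparent.

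For (i)$\Rightarrow$(ii), I would use the fact that closedness of $A$ gives the orthogonal decomposition $\hil\times\kil=G(A)\oplus G(A)^{\perp}$. Given an arbitrary $(u,v)\in\hil\times\kil$, decompose it as $(x,Ax)+(-A^*z,z)$ for some $x\in\dom A$, $z\in\dom A^*$; reading off coordinates gives $u=x-A^*z$ and $v=Ax+z$, i.e.\ $\pair{u}{v}=M_{A,A^*}\pair{x}{z}$. The implication (ii)$\Rightarrow$(iii) is obvious.

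The main step is (iii)$\Rightarrow$(i), and the key trick is the same computational identity used in the proofs of the previous two theorems. Take $(u,v)\in \overline{G(A)}=G(A)^{\perp\perp}$; by (iii) pick $x\in\dom A$ and $z\in\dom A^*$ with
\begin{equation*}
\pair{u}{v}=\pair{x-A^*z}{Ax+z}.
\end{equation*}
Since $(u,v)\in G(A)^{\perp\perp}$, it is orthogonal in particular to the element $(-A^*z,z)\in G(A)^{\perp}$; substituting $u,v$ from the display into the orthogonality relation, and using $\sip{Ax}{z}=\sip{x}{A^*z}$, the cross terms cancel and we are left with
\begin{equation*}
0=\sip{z}{z}+\sip{A^*z}{A^*z}.
\end{equation*}
Hence $z=0$, so $u=x\in\dom A$ and $v=Ax=Au$, which places $(u,v)\in G(A)$. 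Thus $\overline{G(A)}\subseteq G(A)$ and $A$ is closed.

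The only potential obstacle is making sure the identification $G(A)^{\perp}=\set{(-A^*z,z)}{z\in\dom A^*}$ is in force, but this is exactly the defining property of $A^*$ for densely defined $A$, so no extra work is needed. The rest is the by-now familiar two-by-two matrix computation, essentially identical to the pattern in Theorem~\ref{T:sum}.
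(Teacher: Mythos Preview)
Your argument is correct. The implications (i)$\Rightarrow$(ii)$\Rightarrow$(iii) match the paper verbatim. For (iii)$\Rightarrow$(i) you take a shorter path than the paper: you first rewrite (iii) as $\overline{G(A)}\subseteq\ran M_{A,A^*}$ via the identification $G(A)^{\perp}=\set{(-A^*z,z)}{z\in\dom A^*}$, and then for an arbitrary $(u,v)\in\overline{G(A)}$ you pair it against the single vector $(-A^*z,z)\in G(A)^{\perp}$ coming from its own $M_{A,A^*}$-preimage, obtaining $z=0$ in one stroke. The paper instead proceeds in two separate stages: it first shows $\dom A^*$ is dense (by testing $(0,v)$ with $v\perp\dom A^*$), so that $A^{**}$ exists, and then shows $\dom A^{**}\subseteq\dom A$ (by testing $(u,A^{**}u)$); each stage uses a computation of the same flavour as yours. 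Your route avoids invoking $A^{**}$ altogether and is more economical, while the paper's version has the side benefit of making the density of $\dom A^*$ an explicit intermediate conclusion. The underlying cancellation $\sip{Ax}{z}=\sip{x}{A^*z}$ is the same in both arguments.
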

\begin{proof}
The following identity is well known for densely defined closed operators $A$:
\begin{equation*}
G(A)\oplus \set{(-A^*z,z)}{z\in\dom A^*}^{\perp}=\hil\times\kil.
\end{equation*}
Consequently, if we assume (i) then for any $(u,v)\in\hil\times\kil$ we can find $x\in\dom A$ and $z\in\dom A^*$ such that 
\begin{align*}
\pair{u}{v}=\pair{x}{Ax}+\pair{-A^*z}{z}=\operator{-A^*}{A}\pair{x}{z}.
\end{align*}
Hence (i) implies (ii). It is clear that (ii) implies (iii). To see that (iii) implies (i) we are going to prove that $A^*$ is densely defined and $A^{**}=A$. With this aim, consider $v\in(\dom A^*)^{\perp}$ and observe that 
\begin{equation*}
(0,v)\in\set{(-A^*z,z)}{z\in\dom A^*}^{\perp}.
\end{equation*}
By (iii) we can choose $x\in\dom A$ and $z\in\dom A^*$ such that 
\begin{equation*}
\pair{0}{v}=\operator{-A^*}{A}\pair{x}{z}.
\end{equation*}
Consequently, 
\begin{equation*}
0=x-A^*z\qquad \textrm{and}\qquad v=Ax+z,
\end{equation*}
which yields $z\in\dom (AA^*)$ and $v=AA^*z+z$. Hence we obtain that 
\begin{align*}
0=\sip{v}{z}=\sip{AA^*z}{z}+\sip{z}{z}=\sip{A^*z}{A^*z}+\sip{z}{z},
\end{align*}
whence $z=0$ and therefore $v=0$, as it is claimed. It is clear now that the closure $A^{**}$ of exists and extends $A$. Our only claim is therefore to show that $A=A^{**}$, or equivalently $\dom A^{**}\subseteq\dom A$. For this purpose, consider $u\in\dom A^{**}$ and observe that 
\begin{equation*}
(u, A^{**}u)\in\set{(-A^*z,z)}{z\in\dom A^*}^{\perp}.
\end{equation*}
Hence, in view of (iii) we can find $x\in\dom A$ and $z\in\dom A^*$ such that 
\begin{equation*}
\pair{u}{A^{**}u}=\operator{-A^*}{A}\pair{x}{z},
\end{equation*}
which follows  
\begin{equation}\label{E:eq1}
u=x-A^*z\qquad \textrm{and}\qquad A^{**}u=Ax+z.
\end{equation}
The first identity  yields $A^*z\in\dom A^{**}$ and hence $A^{**}u=A^{**}x-A^{**}A^*z=Ax-A^{**}A^*z$. This, together with the second formula of \eqref{E:eq1} gives 
$z=-A^{**}A^*z$ whence 
\begin{equation*}
0\leq\sip{z}{z}=\sip{-A^{**}A^*z}{z}\leq0.
\end{equation*}
Consequently, $z=0$ whence $u=x\in\dom A$, as it is claimed. 
\end{proof}
Recall the celebrated von Neumann theorem \cite{vonNeumann, vonNeumann1930} asserting that both $T^*T$ and $TT^*$ are selfadjoint operators, whenever  $T$ is densely defined and closed. The preceding theorem enables us to establish  the converse of that statement (see also \cite{SZ-TZS:reversed}):
\begin{corollary}
Let $T$ be a densely defined linear operator between $\hil$ and $\kil$. The following assertions are equivalent:
\begin{enumerate}[\upshape (i)]
\item $T$ is closed.
\item $T^*T$ and $TT^*$ are both selfadjoint operators in the Hilbert spaces $\hil$ and $\kil$, respectively.
\item $I+T^*T$ and $I+TT^*$ both have full range. 
\item $\set{(-T^*z,z)}{z\in\dom T^*}^{\perp}\subseteq \ran (I+T^*T)\times\ran (I+TT^*)$.
\end{enumerate}
\end{corollary}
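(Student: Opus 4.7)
The plan is to prove the cycle (i) $\Rightarrow$ (ii) $\Rightarrow$ (iii) $\Rightarrow$ (iv) $\Rightarrow$ (i), with Theorem \ref{T:closed} doing the heavy lifting for the last arrow.

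For (i) $\Rightarrow$ (ii), I would just cite the von Neumann theorem as recalled immediately above the statement. The implication (ii) $\Rightarrow$ (iii) is textbook spectral theory: if $T^{*}T$ is selfadjoint then so is $I+T^{*}T$, and it satisfies $\sipt{(I+T^{*}T)x}{x}\geq \sip{x}{x}$ on its domain, which already forces injectivity and closed range; combined with the general identity $\ran(I+T^{*}T)^{\perp}=\ker(I+T^{*}T)=\{0\}$ this gives $\ran(I+T^{*}T)=\hil$. The same argument applied to $TT^{*}$ gives $\ran(I+TT^{*})=\kil$. Hence (iii) holds, and immediately also (iv) because the right-hand side of (iv) then equals all of $\hil\times\kil$.

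The substantive step is (iv) $\Rightarrow$ (i), and here I would reduce to Theorem \ref{T:closed}(iii). The key observation is that the range of $M_{T,T^{*}}$ already contains $\ran(I+T^{*}T)\times\ran(I+TT^{*})$, because of the two elementary computations
\begin{equation*}
\operator{-T^{*}}{T}\pair{x}{-Tx}=\pair{(I+T^{*}T)x}{0},\qquad x\in\dom(T^{*}T),
\end{equation*}
and
\begin{equation*}
\operator{-T^{*}}{T}\pair{T^{*}z}{z}=\pair{0}{(I+TT^{*})z},\qquad z\in\dom(TT^{*}).
\end{equation*}
Linearity then yields $\ran(I+T^{*}T)\times\ran(I+TT^{*})\subseteq\ran M_{T,T^{*}}$. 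Combining with (iv) gives $\{(-T^{*}z,z):z\in\dom T^{*}\}^{\perp}\subseteq\ran M_{T,T^{*}}$, which is precisely the hypothesis (iii) of Theorem \ref{T:closed}; that theorem then yields closedness of $T$.

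The main obstacle, if any, is recognizing that the ``right'' vectors to feed into the matrix $M_{T,T^{*}}$ are $(x,-Tx)$ and $(T^{*}z,z)$ so as to get the Neumann factors $I+T^{*}T$ and $I+TT^{*}$ on the nose; once those two identities are in hand, the whole argument is a short reduction to Theorem \ref{T:closed}, with no delicate domain issues since $\dom(T^{*}T)\subseteq\dom T$ and $\dom(TT^{*})\subseteq\dom T^{*}$ are automatic.
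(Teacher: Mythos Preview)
Your proof is correct and follows essentially the same route as the paper. The only cosmetic difference is that the paper packages your two vector computations into the single matrix identity
\[
\operator{-T^{*}}{T}\operator{T^{*}}{-T}=\left(\begin{array}{cc} I+T^{*}T & 0\\ 0 & I+TT^{*}\end{array}\right),
\]
which simultaneously yields the range inclusion $\ran(I+T^{*}T)\times\ran(I+TT^{*})\subseteq\ran M_{T,T^{*}}$ for (iv)$\Rightarrow$(i) and, by applying Theorem~\ref{T:closed} to both factors on the left, also gives (i)$\Rightarrow$(ii) without invoking von Neumann externally; your two displayed identities are precisely this product applied to the columns $\pair{x}{0}$ and $\pair{0}{z}$.
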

\begin{proof}
 The proof  relies on the following identity:
 \begin{equation}\label{E:product}
\operator{-T^*}{T}\operator{T^*}{-T}=\left(\begin{array}{cc}\!\! I+T^*T &  0\!\!\\ \!\! 0& I+TT^*\!\!\end{array}\right)
\end{equation}
In fact, if $T$ is closed then both matrices on the left side of \eqref{E:product} have full range due to Theorem \ref{T:closed}, hence the operator on the right side has full range too. This means that $I+T^*T$ and $I+TT^*$ are both surjective symmetric, hence selfadjoint operators. Thus (i) implies (ii). Assumption  (ii) implies that $I+T^*T$ and $I+TT^*$ are bounded below closed operators with dense range, thus they must be surjective. Hence (ii) implies (iii). Implication (iii)$\Rightarrow$(iv) is straightforward. Finally, formula \eqref{E:product} yields range inclusion
\begin{equation*}
 \ran (I+T^*T)\times\ran (I+TT^*)\subseteq \ran M_{T,T^*},
\end{equation*}
hence implication (iv)$\Rightarrow$(i) follows from Theorem \ref{T:closed}. 
\end{proof}
\begin{corollary}
A densely defined closable operator $T$ between $\hil$ and $\kil$ is closed if and only if $\dom T^{**}\subseteq \ran (I+T^*T)$ and $\ran T^{**}\subseteq\ran (I+TT^*)$.
\end{corollary}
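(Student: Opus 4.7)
The forward direction is immediate from the preceding corollary: if $T$ is closed, then $I+T^*T$ and $I+TT^*$ are surjective, so both inclusions hold trivially, with $T^{**}=T$.

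For the converse, the plan is to leverage the factorization identity \eqref{E:product}, which yields the range inclusion
\begin{equation*}
\ran(I+T^*T)\times\ran(I+TT^*)\subseteq \ran M_{T,T^*}.
\end{equation*}
Since $T$ is densely defined and closable, $T^{**}$ exists and agrees with the closure of $T$, so the task reduces to showing $\dom T^{**}\subseteq \dom T$. Given $u\in\dom T^{**}$, the two hypotheses place $(u,T^{**}u)$ in the left-hand side of the displayed inclusion, hence in $\ran M_{T,T^*}$. Solving the resulting matrix equation then produces $x\in\dom T$ and $z\in\dom T^*$ with
\begin{equation*}
u=x-T^*z,\qquad T^{**}u=Tx+z.
\end{equation*}

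From here I would mimic the closing step of Theorem~\ref{T:closed}. The first equation shows $T^*z=x-u\in\dom T^{**}$, so $T^{**}$ may be applied to both sides; using $T^{**}x=Tx$ and the second equation, one extracts $z+T^{**}T^*z=0$. Taking the inner product with $z$ and invoking $(T^{**})^*=T^*$ (valid because $T$ is closable, so $T^*$ is densely defined), one arrives at
\begin{equation*}
\sip{z}{z}+\sip{T^*z}{T^*z}=0,
\end{equation*}
which forces $z=0$, whence $u=x\in\dom T$.

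The only genuinely substantive step is that last positivity computation: passing from $z=-T^{**}T^*z$ to a nonnegativity identity hinges on transferring $T^{**}$ across the inner product, which in turn rests on the equality $T^{***}=T^*$. This is precisely where the closability of $T$ is used; everything else amounts to a direct combination of the range inclusion coming from \eqref{E:product} with the characterization in Theorem~\ref{T:closed}.
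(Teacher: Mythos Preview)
Your argument is correct and takes essentially the same route as the paper. The paper compresses it further by observing that, for closable $T$, one has $G(T^{**})=\set{(-T^*z,z)}{z\in\dom T^*}^{\perp}$, so the two stated inclusions amount precisely to condition (iv) of the preceding corollary; your version simply unpacks the implication (iv)$\Rightarrow$(i) there by reproducing the closing computation of Theorem~\ref{T:closed}.
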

\begin{proof}
Recall the identity 
\begin{equation*}
G(T^{**})=\set{(-T^*z,z)}{z\in\dom T^*}^{\perp}
\end{equation*}
for densely defined closable operators. Thus, by the preceding corollary, $T$ is closed if and only if 
\begin{equation*}
G(T^{**})\subseteq \ran (I+T^*T)\times\ran (I+TT^*),
\end{equation*}
or equivalently, if $\dom T^{**}\subseteq \ran (I+T^*T)$ and $\ran T^{**}\subseteq\ran (I+TT^*)$.
\end{proof}

\section{Perturbation theorems for selfadjoint and essentially selfadjoint operators}

This section is devoted to the perturbation  theory of selfadjoint and essentially selfadjoint operators. The vast majority of the  results to be proved are based on the the next theorem which contains a criterion for a  symmetric operator  operator to be selfadjoint. We also notice that the equivalence of (i) and (ii) below is taken from \cite[Theorem 5.1]{Characterization}, cf. also \cite[Corollary 3.6]{Popovici}. For sake of brevity we also adopt the notation 
\begin{equation*}
M_{S,T}(c):=\coperator{-T}{S}
\end{equation*} 
of \cite{Popovici} for given $S:\hil\to \kil$, $T:\kil\to\hil$ and $c\in\dupR$. 
\begin{theorem}\label{T:selfadjoint}
Let $A$ be a linear operator in the Hilbert space $\hil.$ The following assertions are equivalent:
\begin{enumerate}[\upshape (i)]
 \item $A$ is a (densely defined) selfadjoint operator.
 \item A is symmetric and there exists $c\in\dupR, c\neq0,$ such that 
 \begin{equation*}
\ran M_{A,A}(c)=\hil\times \hil.
\end{equation*}
 \item $A$ is symmetric and there exists $c\in\dupR, c\neq0,$ such that
 \begin{equation*}
G(c^{-1}A)^{\perp}\subseteq \ran  M_{A,A}(c).
\end{equation*} 
\end{enumerate}
\end{theorem}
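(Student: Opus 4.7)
The plan is to establish the cyclic chain (i) $\Rightarrow$ (ii) $\Rightarrow$ (iii) $\Rightarrow$ (i), following the same pattern developed in Theorem \ref{T:sum} and in the corollary after Theorem \ref{T:closed}. The bridge (ii) $\Rightarrow$ (iii) is trivial, so the real work concentrates on the two remaining implications, and the operator matrix $M_{A,A}(c)$ will play the same role here that $M_{A+B,A^*+B^*}$ played earlier.

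For (i) $\Rightarrow$ (ii) I would exploit the matrix factorisation
\begin{equation*}
\coperator{-A}{A}\coperator{A}{-A}=\left(\begin{array}{cc}\!\! c^2I+A^2 & 0\!\!\\ \!\! 0 & c^2I+A^2\!\!\end{array}\right),
\end{equation*}
valid on $\dom A^2\times \dom A^2$. When $A$ is selfadjoint, von Neumann's theorem guarantees that $A^2$ is selfadjoint, whence $c^2I+A^2$ is selfadjoint and bounded below by $c^2>0$, hence surjective. The right hand side therefore maps $\dom A^2\times \dom A^2$ onto $\hil\times \hil$, forcing the left factor $M_{A,A}(c)$ to have full range, which is (ii).

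For (iii) $\Rightarrow$ (i) I would mimic the two-step template of Theorem \ref{T:sum}. First, to obtain density of $\dom A$: for any $u\in (\dom A)^\perp$ the vector $(u,0)$ lies in $G(c^{-1}A)^\perp$, and (iii) supplies $x,v\in\dom A$ with $cx-Av=u$ and $Ax+cv=0$. Expanding $0=\sip{u}{x}$ and using the symmetry of $A$ to exchange $\sip{Av}{x}$ for $\sip{v}{Ax}=-c\sip{v}{v}$ collapses everything to $0=c(\sip{x}{x}+\sip{v}{v})$; since $c\neq 0$, this forces $x=v=0$ and hence $u=0$. Second, to verify $\dom A^*\subseteq \dom A$: for $y\in \dom A^*$ one checks that $(-c^{-1}A^*y,y)\in G(c^{-1}A)^\perp$, so (iii) furnishes $x,v\in\dom A$ with $cx-Av=-c^{-1}A^*y$ and $Ax+cv=y$. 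Pairing $(x,c^{-1}Ax)\in G(c^{-1}A)$ against $(-c^{-1}A^*y,y)$ vanishes on one side; expanding the same pairing through the matrix identity and cancelling the cross terms $\sip{x}{Av}$ and $\sip{Ax}{v}$ by symmetry leaves exactly $c\sip{x}{x}+c^{-1}\sip{Ax}{Ax}=0$. Multiplying by $c$ rewrites this as $c^2\|x\|^2+\|Ax\|^2=0$, so $x=0$; then $y=cv\in\dom A$, and the already secured symmetry promotes $A\subseteq A^*$ to $A=A^*$.

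The main obstacle will be the second inner-product computation. Its success hinges on two things simultaneously: the symmetric cancellation of the cross terms (which is where the hypothesis that $A$ is symmetric is actually used) and the fact that, for a real coefficient $c\neq 0$, the surviving quantity $c^2\|x\|^2+\|Ax\|^2$ is a sum of non-negative reals whose vanishing forces $x=0$. Dropping symmetry, or allowing $c=0$, destroys the sign information and one loses control of $x$; the rest of the argument is then essentially bookkeeping built on top of that one identity.
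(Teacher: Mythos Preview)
Your proposal is correct and follows essentially the same route as the paper: the cyclic chain (i) $\Rightarrow$ (ii) $\Rightarrow$ (iii) $\Rightarrow$ (i), with the (iii) $\Rightarrow$ (i) step carried out via exactly the two inner-product computations you describe (density of $\dom A$, then $\dom A^*\subseteq\dom A$). The only cosmetic differences are that for (i) $\Rightarrow$ (ii) the paper simply invokes Theorem~\ref{T:closed} applied to $c^{-1}A$ rather than your factorisation-plus-von-Neumann argument, and in the density step the paper substitutes $v=-c^{-1}Ax$ to get $c\|x\|^2+c^{-1}\|Ax\|^2=0$ whereas your direct use of symmetry yields the equivalent $c(\|x\|^2+\|v\|^2)=0$; neither change affects the substance.
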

\begin{proof}
For densely defined closed operators $A$ one has $\ran M_{A,A^*}=\hil\times\hil$, in account of Theorem \ref{T:closed}. Hence, by replacing $A$ by $c^{-1}A$  we conclude that (i) implies (ii).  It is obvious that (ii) implies (iii). Finally, let us assume (iii). We are going to prove first that $A$ is densely defined: pick $u\in\dom A^{\perp}$ and observe that $(u,0)\in G(A)^{\perp}$. Hence there exist $x,y\in \dom A$ such that 
\begin{align*}
\pair{u}{0}=\coperator{-A}{A}\pair{x}{y}=\pair{cx-Ay}{Ax+cy}.
\end{align*}
Consequently, $y=c^{-1}Ax$ and $u=cx+c^{-1}A^2x$. This gives 
\begin{align*}
0=\sip{u}{x}=c\sip{x}{x}+c^{-1}\sip{A^2x}{x},
\end{align*}
whence $x=0=u$, as it is claimed. We see therefore that $A$ is densely defined and symmetric, so $A\subseteq A^*$. We need only to check that $\dom A^*\subseteq\dom A$. To this aim let $v\in\dom A^*$. Since $(-c^{-1}A^*v,v)\in G(c^{-1}A)^{\perp}$ we see by (iii) that there exist $x,y\in \dom A$ such that 
\begin{align*}
\pair{-c^{-1}A^*v}{v}=\coperator{-A}{A}\pair{x}{y}=\pair{cx-Ay}{Ax+cy}.
\end{align*}
Consequently, 
\begin{align*}
0&=\sip[\bigg]{\pair{cx-Ay}{Ax+cy}}{\pair{x}{c^{-1}Ax}}\\ 
 &=c\sip{x}{x}-\sip{Ay}{x}+c\sip{y}{c^{-1}Ax}+c^{-1}\sip{Ax}{Ax}\\
 &=c\sip{x}{x}+c^{-1}\sip{Ax}{Ax},
\end{align*}
whence $x=0$. This yields $v=cy\in\dom A$ and thus we conclude that $A^*=A$, as it is stated. 
\end{proof}
\begin{corollary}
Let $A,B$ be densely defined linear operators in the Hilbert space $\hil$ such that $B\subseteq A \subseteq A^*\subseteq B^*$. Then $A$ is selfadjoint if and only if 
\begin{equation*}
\sip{Ax}{y}=\sip{x}{B^*y},\qquad \forall x\in \dom A,
\end{equation*}
implies $y\in\dom A$ for any $y\in \dom B^*$. 
\end{corollary}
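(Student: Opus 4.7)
The plan is to view the corollary as essentially an unpacking of the definition of the adjoint under the given chain $B\subseteq A\subseteq A^*\subseteq B^*$. First I would show that, for a given $y\in\hil$, the condition ``$y\in\dom B^*$ and $\sip{Ax}{y}=\sip{x}{B^*y}$ for every $x\in\dom A$'' is in fact equivalent to the single condition $y\in\dom A^*$. One direction is immediate from the definition of the adjoint of the densely defined operator $A$: the displayed identity forces $y\in\dom A^*$ with $A^*y=B^*y$. For the other direction I would invoke the inclusion $A^*\subseteq B^*$: if $y\in\dom A^*$, then automatically $y\in\dom B^*$ and $B^*y=A^*y$, so the identity $\sip{Ax}{y}=\sip{x}{A^*y}=\sip{x}{B^*y}$ holds for all $x\in\dom A$.

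Having established this reformulation, the implication assumed in the statement of the corollary reads precisely $\dom A^*\subseteq\dom A$. Combining this with symmetry $A\subseteq A^*$ (which yields $\dom A\subseteq\dom A^*$ and equality of the actions on $\dom A$), I would conclude that $A=A^*$, which proves the backward direction. The forward direction is then immediate: if $A=A^*$, the inclusion $\dom A^*\subseteq\dom A$ is trivial, so any $y$ satisfying the hypotheses automatically lies in $\dom A$.

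The only subtle point—and there is no real obstacle here—is recognizing that the chain $A^*\subseteq B^*$ is arranged precisely so that the functional $x\mapsto\sip{x}{B^*y}$ coincides with $x\mapsto\sip{x}{A^*y}$ for every $y\in\dom A^*$; thus $B^*$ plays no role beyond providing an a priori candidate for $A^*y$ against which to test membership in $\dom A^*$. As a consequence, the full strength of Theorem \ref{T:selfadjoint} is not needed for this corollary; the argument reduces to careful bookkeeping with the definition of the adjoint plus the given containments.
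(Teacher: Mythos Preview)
Your argument is correct, and it takes a genuinely different route from the paper's proof. The paper establishes only the nontrivial (backward) direction, and it does so by showing that the hypothesis forces $G(A)^{\perp}\subseteq\ran M_{A,A}(1)$: given $(u,y)\in G(A)^{\perp}$, one first observes that $(Bx\mid y)=(Ax\mid y)=(x\mid -u)$ for all $x\in\dom B$, whence $y\in\dom B^*$ with $B^*y=-u$; the assumed implication then yields $y\in\dom A$, and one checks $(u,y)=M_{A,A}(1)\binom{0}{y}$. Theorem~\ref{T:selfadjoint} (iii)$\Rightarrow$(i) finishes the job.

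By contrast, you bypass the matrix machinery entirely and reduce the statement to the observation that, under the chain $A^*\subseteq B^*$, the displayed condition on $y$ is \emph{equivalent} to $y\in\dom A^*$; the corollary then collapses to ``$\dom A^*\subseteq\dom A$ iff $A=A^*$'', which is immediate from symmetry. Your approach is shorter and more elementary, and it makes transparent that the role of $B$ is purely auxiliary (it supplies a candidate $B^*y$ for $A^*y$). The paper's approach, on the other hand, is consistent with its overall programme of deriving such statements from the range behaviour of $M_{A,A}(c)$, so it illustrates the method even if it is not the most economical proof of this particular corollary.
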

\begin{proof}
Let $(u,y)\in G(A)^{\perp}$, i.e., $\sip{x}{u}+\sip{Ax}{y}=0$ for all $x\in\dom A$. Then 
\begin{equation*}
\sip{Bx}{y}=\sip{Ax}{y}=\sip{x}{-u}, \qquad \textrm{for all $x\in \dom B$},
\end{equation*}
which gives $y\in\dom B^*$ and $B^*y=-u.$ Our assumptions imply  $y\in\dom A$ and  $-u=B^*y=Ay$ which yields
\begin{align*}
\pair{u}{y}=\pair{-Ay}{y}=\operator{-A}{A}\pair{0}{y}\in \ran M_{A,A}(1).
\end{align*}
Theorem \ref{T:selfadjoint} therefore applies. 
\end{proof}
 As a straightforward consequence of Theorem \ref{T:selfadjoint} we get a useful  characterization of essentially selfadjoint operators  in terms of the operator matrix $M_{A,-A}(c)$. Note also immediately that this result generalizes \cite[Theorem 5.1]{Charess}.
\begin{theorem}\label{C:essential}
Let $A$ be a closable (not necessarily densely defined) symmetric operator in the real or complex Hilbert space $\hil$. The following assertions are equivalent: 
\begin{enumerate}[\upshape (i)]
 \item $A$ is essentially selfadjoint.
 \item There exists $c\in\dupR, c\neq0,$ such that 
 \begin{equation*}
\overline{\ran M_{A,A}(c)}=\hil\times\hil.
\end{equation*}
 \item There exists $c\in\dupR, c\neq0,$ such that
 \begin{equation*}
G(c^{-1}A)^{\perp}\subseteq \overline{\ran M_{A,A}(c)}.
\end{equation*} 
\end{enumerate}
\end{theorem}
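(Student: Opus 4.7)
My plan is to reduce the statement to Theorem~\ref{T:selfadjoint} applied to the closure $\overline{A}$. Indeed, $A$ is essentially selfadjoint precisely when $\overline{A}$ is a (densely defined) selfadjoint operator, and the closure of a symmetric operator is again symmetric. The reduction will hinge on two auxiliary facts that I would establish first.

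The first is a \emph{density transfer}:
\begin{equation*}
\overline{\ran M_{A,A}(c)} = \overline{\ran M_{\overline{A},\overline{A}}(c)}.
\end{equation*}
One inclusion is immediate from $A\subseteq\overline{A}$; for the reverse, given $x,y\in\dom\overline{A}$, I would pick sequences $x_n,y_n\in\dom A$ with $x_n\to x$, $Ax_n\to\overline{A}x$ and analogously for $y_n$, and verify that $(cx_n-Ay_n,\,Ax_n+cy_n)$ converges to $M_{\overline{A},\overline{A}}(c)(x,y)$. The second fact is \emph{range closedness}: for any closed symmetric $B$ on $\hil$ and any $c\in\dupR\setminus\{0\}$, the set $\ran M_{B,B}(c)$ is already closed. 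The crux is the symmetry-driven identity
\begin{equation*}
\|cx-By\|^2+\|Bx+cy\|^2=c^2\bigl(\|x\|^2+\|y\|^2\bigr)+\|Bx\|^2+\|By\|^2,\qquad x,y\in\dom B,
\end{equation*}
in which the four cross terms collapse via $\sip{Bx}{y}=\sip{x}{By}$. Consequently, if $M_{B,B}(c)(x_n,y_n)$ is Cauchy, then so are $(x_n)$, $(y_n)$, $(Bx_n)$, $(By_n)$, and the closedness of $B$ pulls the limit pair back into $\dom B\times\dom B$.

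With these two facts in hand, the three implications are short. For (i)$\Rightarrow$(ii), Theorem~\ref{T:selfadjoint} applied to the selfadjoint operator $\overline{A}$ gives $\ran M_{\overline{A},\overline{A}}(c)=\hil\times\hil$ for every $c\neq 0$, and density transfer upgrades this to $\overline{\ran M_{A,A}(c)}=\hil\times\hil$. The step (ii)$\Rightarrow$(iii) is immediate. For (iii)$\Rightarrow$(i), I would use the elementary identification $G(c^{-1}\overline{A})^\perp=G(c^{-1}A)^\perp$ (the two graphs differ only by the closure in $\hil\times\hil$, and orthogonal complements ignore that) and then chain density transfer with range closedness:
\begin{equation*}
G(c^{-1}\overline{A})^\perp\subseteq\overline{\ran M_{A,A}(c)}=\overline{\ran M_{\overline{A},\overline{A}}(c)}=\ran M_{\overline{A},\overline{A}}(c).
\end{equation*}
Since $\overline{A}$ is symmetric, Theorem~\ref{T:selfadjoint}\,(iii)$\Rightarrow$(i) delivers the selfadjointness of $\overline{A}$, which is exactly the essential selfadjointness of $A$.

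The only non-routine point is the range closedness; everything else is bookkeeping between $A$ and $\overline{A}$. I expect the precise cancellation of the cross terms in the norm identity, which relies crucially on the symmetry of $B$, to be the decisive step: without it $\ran M_{B,B}(c)$ would not need to be closed, and the entire reduction to Theorem~\ref{T:selfadjoint} would break down.
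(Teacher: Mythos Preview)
Your proposal is correct and follows essentially the same route as the paper: both arguments hinge on the symmetry-driven norm identity $\|M_{A,A}(c)(x,y)\|^2=c^2(\|x\|^2+\|y\|^2)+\|Ax\|^2+\|Ay\|^2$ to conclude $\overline{\ran M_{A,A}(c)}=\ran M_{\overline{A},\overline{A}}(c)$, and then invoke Theorem~\ref{T:selfadjoint} for $\overline{A}$. The only cosmetic difference is packaging: the paper phrases your density transfer and range closedness as the single observation that $M_{A,A}(c)$ is closable with closure $M_{\overline{A},\overline{A}}(c)$ and bounded below, whence its range closure equals the range of its closure.
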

\begin{proof}
Observe on the one hand that, for $c\in\dupR, c\neq0$ and $x,y\in\dom A$ the symmetry of $A$ yields   
\begin{align*}
\left\|\coperator{-A}{A}\pair{x}{y}\right\|^2=c^2\left\|\pair{x}{y}\right\|^2+\left\|\pair{Ax}{Ay}\right\|^2
\end{align*}
hence $M_{A,A}$ is bounded below by $c^2>0$. On the other hand, denoting by $\overline{A}$ the closure of $A$ one easily verifies that $M_{A,A}(c)$ is closable and its closure equals $M_{\overline{A},\overline{A}}(c)$. Since $M_{\overline{A},\overline{A}}(c)$ is bounded below  by $c^2$ as well, its range is closed. Thus we get 
\begin{align}
\overline{\ran M_{A,A}(c)}=\ran M_{\overline{A},\overline{A}}(c).
\end{align}
We can therefore apply Theorem \ref{T:selfadjoint} to the symmetric operator $\overline{A}$.
\end{proof}

We now turn our attention to the perturbation theory of selfadjoint (resp. essentially selfadjoint) operators. To do so let us  recall first the notion of $A$-boundedness: Given two linear operators $A$ and $B$ in the real or complex Hilbert space $\hil$ we say that $B$ is $A$-bounded if $\dom A\subseteq \dom B$ and there exist $\alpha,\beta\geq0$ such that 
\begin{align}\label{E:Tbounded}
\|Bx\|^2\leq \alpha\|Ax\|^2+ \beta\|x\|^2,\qquad \textrm{for all $x\in \dom A$}.
\end{align}
Note that we receive the same definition of $A$-boundedness if we replace \eqref{E:Tbounded} by 
\begin{align*}
\|Bx\|\leq \alpha'\|Ax\|+ \beta'\|x\|,\qquad \textrm{for all $x\in \dom A$},
\end{align*}
see \cite{KATO}. If $B$ is $A$-bounded then the $A$-bound of $B$ is defined to be the greatest lower bound of the $\alpha$'s satisfying \eqref{E:Tbounded}. An easy application of the closed graph theorem implies that if $A$ is closed and $B$ is closable with $\dom A\subseteq \dom B$ then $B$ must be $A$-bounded, see \cite{Weidmann}. We shall also use the fact that if $A,B$ are closable and $B$ is $A$-bounded then $x_n\to x$ and $Ax_n\to Ax$ imply $x\in\dom B^{**}$ and $Bx_n\to B^{**}x$ for $\seq{x}$ of $\dom A$ and $x$ in $\dom A^{**}$. In particular, $B^{**}$ is $A^{**}$-bounded in that case.  

Our aim in the rest of this section is to provide conditions, involving $A$-bounded\-ness,  which imply selfadjointness (resp., essentially selfadjointness) on the sum between a selfadjoint (resp., essentially selfadjoint) and a symmetric operator. Similar perturbation problems were considered by several authors; cf. eg. \cite{Birman, Jorgensen, KATO, Putnam, Weidmann}. In our approach an essential role is played by the operator matrix $M_{A,A}(c)$.
\begin{theorem}\label{T:perturb1}
Let $A,B$ be symmetric operators in the real or complex Hilbert space $\hil$. Assume that $A$ is essentially selfadjoint and that 
\begin{enumerate}[\upshape a)]
\item $B$ is $A$-bounded,
\item $\|Bx\|^2\leq \|Ax\|^2+\|(A+B)x\|^2+b^2\|x\|^2$ for all $x\in \dom A$ with some $b>0$.
\end{enumerate}
 Then $A+B$ is essentially selfadjoint too. 
\end{theorem}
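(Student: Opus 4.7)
The plan is to invoke Theorem \ref{C:essential} applied to $A+B$. Since $\dom(A+B)=\dom A$ is dense in $\hil$ (essential selfadjointness of $A$ forces $A$ to be densely defined) and $A+B$ is symmetric, it is closable; thus by Theorem \ref{C:essential} it suffices to exhibit some $c\in\dupR$, $c\neq 0$, with $\overline{\ran M_{A+B,A+B}(c)}=\hil\times\hil$. I expect this to hold for every $c$ satisfying $c^{2}>b^{2}/2$.

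Fix such a $c$ and suppose $(p,q)\in\hil\times\hil$ is orthogonal to $\ran M_{A+B,A+B}(c)$; the task is to show $(p,q)=0$. Since $\overline{A}$ is selfadjoint, Theorem \ref{T:closed} yields that $M_{\overline{A},\overline{A}}(c)$ is a bijection $\dom\overline{A}\times\dom\overline{A}\to\hil\times\hil$, so there exist unique $x_{0},y_{0}\in\dom\overline{A}$ with $(p,q)=M_{\overline{A},\overline{A}}(c)(x_{0},y_{0})$. Approximating $x_{0}$ and $y_{0}$ in the $A$-graph norm by sequences $(x_{n}),(y_{n})\subseteq\dom A$ and using hypothesis a) together with the closability remark immediately preceding the theorem (which gives $Bx_{n}\to\overline{B}x_{0}$ and $By_{n}\to\overline{B}y_{0}$), one obtains $M_{A+B,A+B}(c)(x_{n},y_{n})\to M_{\overline{A}+\overline{B},\overline{A}+\overline{B}}(c)(x_{0},y_{0})$. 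Passing to the limit in the orthogonality relation then produces
\begin{equation*}
\sip{M_{\overline{A}+\overline{B},\overline{A}+\overline{B}}(c)(x_{0},y_{0})}{M_{\overline{A},\overline{A}}(c)(x_{0},y_{0})}=0.
\end{equation*}

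The decisive step is to take the real part of this inner product: for $u=(x,y)\in\dom\overline{A}\times\dom\overline{A}$ the cross terms cancel by the symmetry of $\overline{A}$ and of $\overline{A}+\overline{B}$, leaving
\begin{equation*}
\operatorname{Re}\sip{M_{\overline{A}+\overline{B},\overline{A}+\overline{B}}(c)u}{M_{\overline{A},\overline{A}}(c)u}=c^{2}\|u\|^{2}+\operatorname{Re}\sip{(\overline{A}+\overline{B})x}{\overline{A}x}+\operatorname{Re}\sip{(\overline{A}+\overline{B})y}{\overline{A}y}.
\end{equation*}
Hypothesis b), extended to $\overline{A},\overline{B}$ by the same closability remark, is equivalent via the identity $\|\overline{B}x\|^{2}=\|(\overline{A}+\overline{B})x-\overline{A}x\|^{2}$ to $\operatorname{Re}\sip{(\overline{A}+\overline{B})x}{\overline{A}x}\geq -\frac{b^{2}}{2}\|x\|^{2}$ for $x\in\dom\overline{A}$. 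Substituting this into the identity above forces $0\geq\bigl(c^{2}-\frac{b^{2}}{2}\bigr)\|(x_{0},y_{0})\|^{2}$, so $(x_{0},y_{0})=0$ and therefore $(p,q)=0$. The main obstacle I anticipate is recognising that hypothesis b) is precisely what is required to keep $\operatorname{Re}\sip{(\overline{A}+\overline{B})x}{\overline{A}x}$ bounded below: since the $A$-bound of $B$ is not assumed smaller than one, a Kato--Rellich-type Neumann series argument is unavailable, and it is this algebraic rewriting, combined with the bijectivity of $M_{\overline{A},\overline{A}}(c)$, that ultimately drives the proof.
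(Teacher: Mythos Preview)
Your proof is correct and follows essentially the same strategy as the paper's: represent an element orthogonal to $\ran M_{A+B,A+B}(c)$ via the surjectivity of $M_{\overline{A},\overline{A}}(c)$ coming from the selfadjointness of $\overline{A}$, then exploit the symmetry of $\overline{A}$ and $\overline{A}+\overline{B}$ to make the cross terms in the resulting inner product cancel, and finally use hypothesis~b) to force the element to vanish. The only cosmetic differences are that the paper fixes $c=b$ and expands the expression $\sip{M_{A^{**},A^{**}}(b)u}{M_{A^{**}+B^{**},A^{**}+B^{**}}(b)u}+\overline{\,\cdot\,}$ by brute force (which is exactly twice your real part), and that the paper first proves essential selfadjointness of $A^{**}+B^{**}$ and then invokes $A^{**}+B^{**}\subseteq (A+B)^{**}$, whereas you run the approximation argument inside the orthogonality relation itself; your reformulation of b) as $\operatorname{Re}\sip{(\overline{A}+\overline{B})x}{\overline{A}x}\geq -\tfrac{b^{2}}{2}\|x\|^{2}$ is in fact a cleaner way to see why this hypothesis is exactly what is needed.
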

\begin{proof}
We are going to prove first that $A^{**}+B^{**}$ is essentially selfadjoint. With this aim let $(w,z)\in\ran M_{A^{**}+B^{**},A^{**}+B^{**}}(b)^{\perp}$. By selfadjointness of $A^{**}$ we may choose $x,y\in\dom A^{**}$ such that 
\begin{align*}
\boperator{-A^{**}}{A^{**}}\pair{x}{y}=\pair{w}{z}.
\end{align*}
From assumption a) it follows that $\dom A^{**}\subseteq\dom B^{**}$, hence $x,y\in\dom B^{**}$ and 
\begin{align*}
0&=\sip[\bigg]{\boperator{-A^{**}}{A^{**}}\pair{x}{y}}{\boperator{-(A^{**}+B^{**})}{A^{**}+B^{**}}\pair{x}{y}}\\
&=\sip[\bigg]{\boperator{-(A^{**}+B^{**})}{A^{**}+B^{**}}\pair{x}{y}}{\boperator{-A^{**}}{A^{**}}\pair{x}{y}}.
\end{align*}
Summing up, we get
\begin{align*}
0&=2\left\|\boperator{-A^{**}}{A^{**}}\pair{x}{y}\right\|^2 \\ &\quad +\sip[\bigg]{\boperator{-A^{**}}{A^{**}}\pair{x}{y}}{\left(\begin{array}{cc}\!\! 0 &  -B^{**}\!\!\\ \!\! B^{**}& 0\!\!\end{array}\right)\pair{x}{y}}\\ 
&\quad +\sip[\bigg]{\left(\begin{array}{cc}\!\! 0 &  -B^{**}\!\!\\ \!\! B^{**}& 0\!\!\end{array}\right)\pair{x}{y}}{\boperator{-A^{**}}{A^{**}}\pair{x}{y}}\\
&= 2b^2(\|x\|^2+\|y\|^2)+2(\|A^{**}x\|^2+\|A^{**}y\|^2)\\
&\quad + \sip{bx-A^{**}y}{-B^{**}y}+\sip{A^{**}x+by}{B^{**}x}\\
&\quad+ \sip{-B^{**}y}{bx-A^{**}y}+\sip{B^{**}x}{A^{**}x+by}\\
&=  2b^2(\|x\|^2+\|y\|^2)+2(\|A^{**}x\|^2+\|A^{**}y\|^2)\\
&\quad+ \sip{A^{**}x}{B^{**}x}+\sip{B^{**}x}{A^{**}x}+ \sip{A^{**}y}{B^{**}y}+\sip{B^{**}y}{A^{**}y}\\
&= b^2(\|x\|^2+\|y\|^2)\\ 
&\quad+(b^2\|x\|^2+\|A^{**}x\|^2+\|(A^{**}+B^{**})x\|^2-\|B^{**}x\|^2)\\
&\quad+ (b^2\|y\|^2+\|A^{**}y\|^2+\|(A^{**}+B^{**})y\|^2-\|B^{**}y\|^2)\\
&\geq  b^2(\|x\|^2+\|y\|^2).
\end{align*}
Here in the last estimation we used that inequality b) remains true for $x\in\dom A^{**}$ with $A^{**}, B^{**}$ in place of $A,B$, thanks to $A$-boundedness of $B$. 
We conclude therefore that $x=y=0$ whence $w=z=0$. This follows that $A^{**}+B^{**}$ is essentially selfadjoint. Observe finally that $\dom A^{**}\subseteq\dom (A+B)^{**}$, according to a), again, and hence that $A^{**}+B^{**}\subset (A+B)^{**}$. This implies the essential selfadjointness of $A+B$.   
\end{proof}
\begin{corollary}\label{C:perturb2}
Let $A,B$ be symmetric operators in the Hilbert space $\hil$ with $\dom A\subseteq\dom B$. Assume that $A$ is selfadjoint and that 
\begin{enumerate}[\upshape a)]
\item $A$ is $(A+B)$-bounded,
\item $\|Bx\|^2\leq \|Ax\|^2+\|(A+B)x\|^2+b^2\|x\|^2$, for all $x\in\D$ with some $b>0$. 
\end{enumerate}
Then $A+B$ is selfadjoint too. 
\end{corollary}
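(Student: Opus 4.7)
The strategy is to apply Theorem \ref{T:perturb1} first, which delivers essential selfadjointness of $A+B$, and then to exploit hypothesis a) of the corollary to show that $A+B$ is in fact already closed. Since a closed essentially selfadjoint operator is selfadjoint, this will finish the argument.

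Verifying the hypotheses of Theorem \ref{T:perturb1} is the first step. The operator $A$ is essentially selfadjoint (being selfadjoint), and condition b) of the theorem coincides verbatim with hypothesis b) of the corollary. The only point that is not laid out for us is that $B$ should be $A$-bounded. But $A$ is selfadjoint, hence closed; $B$ is symmetric, hence densely defined (by the paper's convention) and closable; and $\dom A\subseteq\dom B$ is assumed. The closed-graph argument recalled just before Theorem \ref{T:perturb1} therefore forces $B$ to be $A$-bounded. Consequently Theorem \ref{T:perturb1} applies and $\overline{A+B}$ is selfadjoint.

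It remains to prove that $A+B$ is closed. Pick $x\in\dom\overline{A+B}$ and a sequence $(x_n)\subset\dom(A+B)=\dom A$ with $x_n\to x$ and $(A+B)x_n\to \overline{A+B}\,x$. Applying hypothesis a), the $(A+B)$-boundedness of $A$, to the differences $x_n-x_m$ shows that $(Ax_n)$ is Cauchy; closedness of $A$ then gives $x\in\dom A$ and $Ax_n\to Ax$. Writing $Bx_n=(A+B)x_n-Ax_n$ reveals that $(Bx_n)$ converges as well, and since $x\in\dom A\subseteq\dom B$, closability of $B$ forces the limit to be $Bx$. Hence $\overline{A+B}\,x=Ax+Bx=(A+B)x$, so $\dom\overline{A+B}\subseteq\dom(A+B)$ and $A+B$ is closed, hence selfadjoint.

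The main conceptual subtlety is recognising that hypothesis a) of the corollary plays a role completely different from condition a) of Theorem \ref{T:perturb1}: the $A$-boundedness of $B$ that Theorem \ref{T:perturb1} needs is hidden in the data and has to be extracted via the closed-graph argument, whereas the $(A+B)$-boundedness of $A$ is reserved for the closedness step, where it transfers Cauchy behaviour from $(A+B)x_n$ to $Ax_n$. Once the two bookkeeping tasks are separated, the rest is routine.
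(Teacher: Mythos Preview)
Your argument is correct and follows the same route as the paper's proof: first extract $A$-boundedness of $B$ from the closed-graph argument to feed Theorem \ref{T:perturb1} and obtain essential selfadjointness, then use hypothesis a) together with closedness of $A$ to conclude that $A+B$ is closed. The paper simply asserts the closedness step in one sentence (``The closedness of $A+B$ is guaranteed by condition a) due to closedness of $A$''), whereas you spell out the Cauchy-sequence argument; the content is identical. One cosmetic point: rather than appealing to a convention, note directly that $\dom B\supseteq\dom A$ is dense because $A$ is selfadjoint, so $B$ is densely defined and hence closable.
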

\begin{proof}
Observe first that $A,B$ fulfill all conditions of Theorem \ref{T:perturb1}. (Theorem \ref{T:perturb1} a) is satisfied because $A$ is closed and $B$ is closable with $\dom A\subseteq \dom B$.) Thus $A+B$ is essentially selfadjoint in account of Theorem \ref{T:perturb1}. The closedness of $A+B$ is guaranteed by condition a) due to closedness of $A$.
\end{proof}
In the ensuing corollary we establish a symmetric variant of Theorem \ref{T:perturb1} and Corollary  $\ref{C:perturb2}$:
\begin{corollary}\label{C:perturb3}
Let $A,C$ be symmetric operators in $\hil$ with common domain $\D$. Assume that $A$ is $C$-bounded, $C$ is $A$-bounded and that 
\begin{equation*}
\|(A-C)x\|^2\leq \|Ax\|^2+\|Cx\|^2+\gamma\|x\|^2
\end{equation*}
for some $\gamma>0$. Then $A$ is selfadjoint (resp., essentially selfadjoint) if and only if  $C$ is selfadjoint (resp., essentially selfadjoint).
\end{corollary}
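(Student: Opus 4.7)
The strategy is to view the passage from $A$ to $C$ as a perturbation by the symmetric operator $B:=C-A$, and to deduce both implications from Theorem \ref{T:perturb1} and Corollary \ref{C:perturb2} after observing that every hypothesis of the statement is symmetric in the pair $(A,C)$.

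First I would treat the forward direction. Setting $B:=C-A$, which is symmetric on $\D$ with $A+B=C$, I would verify the hypotheses of Theorem \ref{T:perturb1} (for the essentially selfadjoint case) and of Corollary \ref{C:perturb2} (for the selfadjoint case). For condition a) of Theorem \ref{T:perturb1}, the triangle inequality gives $\|Bx\|\le \|Ax\|+\|Cx\|$, which combined with the $A$-boundedness of $C$ yields an $A$-bound for $B$ after squaring and absorbing constants. Condition b) is immediate since $\|(A+B)x\|=\|Cx\|$, so the assumption $\|(A-C)x\|^2\le \|Ax\|^2+\|Cx\|^2+\gamma\|x\|^2$ rewrites as $\|Bx\|^2\le \|Ax\|^2+\|(A+B)x\|^2+\gamma\|x\|^2$ with $b:=\sqrt{\gamma}$. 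For the selfadjoint sharpening via Corollary \ref{C:perturb2} I additionally need $A$ to be $(A+B)$-bounded, which is precisely the assumed $C$-boundedness of $A$. Thus $C=A+B$ inherits (essential) selfadjointness from $A$.

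The converse implication is obtained by the same argument with the roles of $A$ and $C$ interchanged: take $B:=A-C$, which is symmetric on $\D$ with $C+B=A$; the $C$-boundedness of $B$ follows from the assumed $C$-boundedness of $A$ via the triangle inequality, and the given estimate on $\|(A-C)x\|^2$ is invariant under the swap, so it again delivers condition b) of Theorem \ref{T:perturb1} (resp.\ Corollary \ref{C:perturb2}) now with base operator $C$.

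I do not anticipate a genuine obstacle: all of the analytic content has already been absorbed into Theorem \ref{T:perturb1} and Corollary \ref{C:perturb2}, and the present corollary reduces to matching the symmetric data in the hypothesis with the asymmetric format of those perturbation results. The only point deserving a moment's care is the correct identification of which operator serves as the unperturbed base in each direction, so that the mixed quantity $\|(A+B)x\|$ appearing in condition b) of the earlier theorems is lined up with the symmetric expression $\|(A-C)x\|$ furnished by the hypothesis.
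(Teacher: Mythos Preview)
Your proposal is correct and follows essentially the same approach as the paper: set $B:=C-A$, read condition b) directly from the hypothesis since $A+B=C$, obtain the $A$-boundedness of $B$ from the triangle inequality together with the $A$-boundedness of $C$, and invoke Theorem \ref{T:perturb1} and Corollary \ref{C:perturb2}; the converse is by the symmetry of the hypotheses. The paper's proof is in fact slightly terser (it only writes out the forward direction), but the content is identical.
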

\begin{proof}
If we assume that that $A$ is essentially selfadjoint then $B:=C-A$ fulfills all conditions of Theorem \ref{T:perturb1}. Indeed, condition b) is seen immediately. On the other hand, 
\begin{align*}
\|Bx\|\leq \|Ax\|+\|Cx\|\leq (1+\alpha')\|Ax\|+\beta'\|x\|,\qquad x\in\D,
\end{align*}
by $A$-boundedness of $C$, whence we see that a) of Theorem \ref{T:perturb1} is also fulfilled. Consequently, $C=A+B$ is essentially selfadjoint. If $A$ is selfadjoint then each condition of Corollary \ref{C:perturb2} is satisfied due to $C$-boundedness of $A$. Hence $C=A+B$ is selfadjoint. 
\end{proof}

As a corollary we retrieve Kato's result on simultaneous selfadjointness of symmetric operators \cite[Theorem V.4.5]{KATO}:
\begin{corollary}
Let $A,C$ be symmetric operators in $\hil$ with common dense domain $\D$. Assume that  there exist $a,b>0, b<1$ such that 
\begin{equation}\label{E:A-C}
\|(A-C)x\|\leq a\|x\|+b(\|Ax\|+\|Cx\|),\qquad \textrm{for all  $x\in\D$}. 
\end{equation}
Then $A$ is  selfadjoint (resp., essentially selfadjoint) if and only if $C$ is  selfadjoint (resp., essentially selfadjoint). 
\end{corollary}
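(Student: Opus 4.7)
The aim is to reduce the statement to Corollary \ref{C:perturb3}, whose hypotheses are mutual relative boundedness of $A$ and $C$ together with the quadratic inequality $\|(A-C)x\|^2 \leq \|Ax\|^2 + \|Cx\|^2 + \gamma\|x\|^2$ for some $\gamma > 0$.

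Mutual relative boundedness is immediate: the triangle inequality combined with the hypothesis yields $\|Cx\| \leq \|Ax\| + \|(A-C)x\| \leq (1+b)\|Ax\| + b\|Cx\| + a\|x\|$, and since $b < 1$ this rearranges to $\|Cx\| \leq \tfrac{1+b}{1-b}\|Ax\| + \tfrac{a}{1-b}\|x\|$; by the $A \leftrightarrow C$ symmetry of the hypothesis, $A$ is in turn $C$-bounded.

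For the quadratic estimate I would first dispose of the subcase $b < 1/\sqrt{2}$. A weighted Cauchy--Schwarz with weights proportional to $(1-2b^2,b^2,b^2)$ gives
\[
\bigl(a\|x\| + b\|Ax\| + b\|Cx\|\bigr)^2 \leq \frac{a^2}{1-2b^2}\|x\|^2 + \|Ax\|^2 + \|Cx\|^2,
\]
so squaring the hypothesis produces the quadratic inequality with $\gamma = a^2/(1-2b^2)$, and Corollary \ref{C:perturb3} finishes the proof in one step.

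The real obstacle is the range $b \in [1/\sqrt{2},1)$: there the quadratic form in $(\|Ax\|,\|Cx\|)$ on the right-hand side of the squared Kato inequality is no longer negative definite, so no uniform $\gamma$ of this shape exists. I would bypass this by interpolation. Set $A_k := (1-k/n)A + (k/n)C$ on the common dense domain $\D$ for $k = 0,1,\dots,n$; each $A_k$ is symmetric. With $B := C-A$, the identities $A = A_k - (k/n)B$ and $C = A_k + (1-k/n)B$ yield $\|Ax\| + \|Cx\| \leq 2\|A_k x\| + \|Bx\|$. Substituting into the hypothesis and using $b < 1$ gives $\|Bx\| \leq \tfrac{a}{1-b}\|x\| + \tfrac{2b}{1-b}\|A_k x\|$, hence the pair $(A_k,A_{k+1})$ satisfies the Kato-type inequality
\[
\|(A_{k+1}-A_k)x\| = \tfrac{1}{n}\|Bx\| \leq \tfrac{a}{n(1-b)}\|x\| + \tfrac{2b}{n(1-b)}\bigl(\|A_k x\| + \|A_{k+1}x\|\bigr),
\]
whose joint constant $b_n := 2b/(n(1-b))$ falls below $1/\sqrt{2}$ once $n$ is sufficiently large. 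Consequently the previous two paragraphs apply to each consecutive pair and, iterating Corollary \ref{C:perturb3} along the chain $A = A_0, A_1, \dots, A_n = C$, (essential) selfadjointness is propagated from $A$ to $C$. The converse implication is obtained by interchanging the roles of $A$ and $C$, as the hypothesis is symmetric in them.
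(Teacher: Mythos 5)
Your proposal is correct, and for the range $b<1/\sqrt{2}$ it coincides with the paper's argument: square the hypothesis, absorb the mixed terms by a weighted Cauchy--Schwarz estimate, and reduce to Corollary \ref{C:perturb3}. Where you diverge is the range $b\in[1/\sqrt{2},1)$, and there your extra work is not merely an alternative route --- it is needed. The paper's displayed estimate begins with $\|(A-C)x\|^2\leq b^2(\|Ax\|^2+\|Cx\|^2)+\cdots$, which silently drops the cross term $2b^2\|Ax\|\,\|Cx\|$ from $b^2(\|Ax\|+\|Cx\|)^2$; restoring it puts the coefficient $2b^2+\varepsilon^2$ in front of $\|Ax\|^2+\|Cx\|^2$, so the one-step passage to the quadratic inequality of Corollary \ref{C:perturb3} only goes through when $2b^2<1$. (Indeed, letting $\|Ax\|=\|Cx\|=t\to\infty$ with $\|x\|$ fixed shows that no $\gamma$ can make $(a\|x\|+b(\|Ax\|+\|Cx\|))^2\leq\|Ax\|^2+\|Cx\|^2+\gamma\|x\|^2$ a formal consequence of the hypothesis once $b>1/\sqrt{2}$.) Your interpolation chain $A_k=(1-k/n)A+(k/n)C$ repairs this: the computations you give ($\|Ax\|+\|Cx\|\leq2\|A_kx\|+\|Bx\|$, hence $\|Bx\|\leq\tfrac{a}{1-b}\|x\|+\tfrac{2b}{1-b}\|A_kx\|$) are correct, each consecutive pair satisfies the Kato-type inequality with relative constant $b_n=2b/(n(1-b))\to0$, the mutual relative boundedness required by Corollary \ref{C:perturb3} at each link follows from your first paragraph applied to $(A_k,A_{k+1})$, and (essential) selfadjointness then propagates from $A=A_0$ to $C=A_n$. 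This discrete continuity-in-the-parameter device is in the spirit of Kato's original proof of \cite[Theorem V.4.5]{KATO} and delivers the full statement for all $b<1$, which the paper's one-shot computation, as written, does not.
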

\begin{proof}
First of all observe that $C$ is $A$-bounded and $A$ is $C$-bounded: Indeed,  \eqref{E:A-C} yields
\begin{equation}\label{E:C<A}
(1-b)\|Cx\|\leq a\|x\|+(1+b)\|Ax\|,\qquad x\in \D.
\end{equation}
whence we see that $C$ is $A$-bounded. That $A$ is $C$-bounded follows by symmetry. 
Choose furthermore $\varepsilon>0$ with $b^2+\varepsilon^2<1$. Then 
\begin{align*}
\|(A-C)x\|^2&\leq  b^2(\|Ax\|^2+\|Cx\|^2) \\
  &\quad+2ab\varepsilon^{-1}\|x\|(\varepsilon\|Ax\|+\varepsilon\|Cx\|)+a^2\|x\|^2\\
  &\leq (b^2+\varepsilon^2)(\|Ax\|^2+\|Cx\|^2)+(a^2+2a^2b^2\varepsilon^{-2})\|x\|^2\\
  &\leq \|Ax\|^2+\|A+Bx\|^2+\gamma\|x\|^2,
\end{align*}
whence  we see that $A,B$ fulfill all conditions of Corollary \ref{C:perturb3}. 
\end{proof}

Nelson in his famous paper \cite{Nelson} proved that if  $A,B$ are commuting symmetric operators on a dense subspace of the (complex) Hilbert space $\hil$ then the essential selfadjointness of $A^2+B^2$ implies essential selfadjointness for both $A$ and $B$ (see \cite[Corollary 9.2]{Nelson} or Corollary \ref{C:Nelson} below for the precise statement). Below we offer two perturbation theorems, inspired by Nelson's result,  in which we prove essential selfadjointness on $A+B$ and $B$ under the weaker condition
\begin{equation*}
\sip{Ax}{By}=\sip{Bx}{Ay},\qquad x,y\in\D
\end{equation*}
instead of requiring the commutation property $ABx=BAx, x\in\D$. Furthermore, our  two-by-two operator matrix technique enables us to extend Nelson's result also for real Hilbert spaces.

Let us start with the following technical lemma:
\begin{lemma}\label{L:AxBy}
Let $A,B$ be symmetric operators in $\hil$ with common (not necessarily dense) domain $\D$ such that 
\begin{equation}\label{E:AxBy}
\sip{Ax}{By}=\sip{Bx}{Ay},\qquad \textrm{for all $x,y\in\D$}.
\end{equation}
Then 
\begin{multline*}
\sip[\bigg]{\operator{-A}{A}\pair{x}{y}}{\operator{B}{-B}\pair{u}{v}}\\
 =\sip[\bigg]{\operator{-B}{B}\pair{x}{y}}{\operator{A}{-A}\pair{u}{v}}
\end{multline*}
for all $x,y,u,v\in\D$.
\end{lemma}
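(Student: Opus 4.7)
The plan is to verify the identity by direct expansion.  The four matrix-vector products appearing act as $\operator{-A}{A}\pair{x}{y}=\pair{x-Ay}{Ax+y}$ and $\operator{B}{-B}\pair{u}{v}=\pair{u+Bv}{-Bu+v}$ on the left, and $\operator{-B}{B}\pair{x}{y}=\pair{x-By}{Bx+y}$ and $\operator{A}{-A}\pair{u}{v}=\pair{u+Av}{-Au+v}$ on the right.  Writing the inner product on $\hil\times\hil$ as the sum of the two coordinate inner products in $\hil$, each side then expands into eight scalar summands, and the task is to match them up.

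I would sort those eight terms on each side into three groups.  The ``free'' summands $(x|u)$ and $(y|v)$ appear identically on both sides and cancel from the comparison.  Four terms on each side involve a single operator: the left side contributes $(x|Bv)-(Ay|u)+(Ax|v)-(y|Bu)$ and the right side contributes $(x|Av)-(By|u)+(Bx|v)-(y|Au)$.  Since $x,y,u,v\in\D$ lie in the common domain on which both $A$ and $B$ are symmetric, the four equalities $(x|Bv)=(Bx|v)$, $(Ay|u)=(y|Au)$, $(Ax|v)=(x|Av)$ and $(y|Bu)=(By|u)$ transform the left-hand combination into the right-hand one.

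The remaining four ``mixed'' summands are $-(Ay|Bv)-(Ax|Bu)$ on the left and $-(By|Av)-(Bx|Au)$ on the right; these coincide by two applications of the hypothesis \eqref{E:AxBy}, once with the pair $(y,v)$ and once with $(x,u)$.  Adding up the three groups yields the claim term by term.

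No analytic tool beyond symmetry of $A,B$ on $\D$ and the compatibility hypothesis is required — in particular no density of $\D$ is used, and the argument works identically over $\dupR$ and $\dupC$, consistently with the authors' emphasis on the real-and-complex case.  I expect the only obstacle to be clerical: keeping track of the signs introduced by the off-diagonal entries $-A,-B$ of the matrices, and being careful not to conflate a single-operator term such as $(Ax|v)$ with a mixed term such as $(Ax|Bu)$ when they are collected.
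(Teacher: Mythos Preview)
Your argument is correct and is essentially the same direct expansion as the paper's proof: both compute the eight scalar summands of the left-hand side, use the symmetry of $A$ and $B$ to move operators across the inner product, and invoke hypothesis~\eqref{E:AxBy} on the two mixed terms to arrive at the right-hand side. Your grouping into ``free,'' ``single-operator,'' and ``mixed'' terms is a clean way to organize the bookkeeping, but the underlying computation is identical.
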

\begin{proof}
Let us compute the left side: 
\begin{multline*}
\sip[\bigg]{\operator{-A}{A}\pair{x}{y}}{\operator{B}{-B}\pair{u}{v}}\\
\begin{aligned}
&=\sip[\bigg]{\pair{x-Ay}{Ax+y}}{\pair{u+Bv}{v-Bu}}\\
&=\sip{x}{u}-\sip{Ay}{u}+\sip{x}{Bv}-\sip{Ay}{Bv}\\
&\quad +\sip{Ax}{v}+\sip{y}{v}-\sip{Ax}{Bu}-\sip{y}{Bu}\\
&=\sip{x}{u}-\sip{y}{Au}+\sip{Bx}{v}-\sip{By}{Av}\\
&\quad+\sip{x}{Av}+\sip{y}{v}-\sip{Bx}{Au}-\sip{By}{u}\\
&=\sip{x}{u+Av}+\sip{y}{v-Au}+\sip{Bx}{v-Au}+\sip{-By}{u+Av}\\
&=\sip[\bigg]{\pair{x-By}{Bx+y}}{\pair{u+Av}{v-Au}}\\
&=\sip[\bigg]{\operator{-B}{B}\pair{x}{y}}{\operator{A}{-A}\pair{u}{v}},
\end{aligned}
\end{multline*}
as it is stated. 
\end{proof}
\begin{theorem}\label{T:perturbAB}
Let $A,B$ be symmetric operators in the real or complex Hilbert space $\hil$ with common (dense) domain $\D$. Assume that $A$ is selfadjoint and that 
\begin{equation*}
\sip{Ax}{By}=\sip{Bx}{Ay},\qquad \textrm{for all $x\in\D$}.
\end{equation*}
Then $B$ and $A+B$ are both essentially selfadjoint on $\D$.
\end{theorem}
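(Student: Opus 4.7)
The plan is to apply Theorem \ref{C:essential}(ii) to both $B$ and $A+B$. Both operators are densely defined and symmetric on $\D$ (since $A$ and $B$ are), hence closable, so it suffices to show that the ranges $\ran M_{B,B}(1)$ and $\ran M_{A+B,A+B}(1)$ are dense in $\hil\times\hil$, equivalently that their orthogonal complements are trivial.

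The two key ingredients are (a) that the selfadjointness of $A$, and hence of $-A$, implies via Theorem \ref{T:selfadjoint}(ii) that both $\operator{-A}{A}$ and $\operator{A}{-A}$ are surjective from $\D\times\D$ onto $\hil\times\hil$, and (b) the identity supplied by Lemma \ref{L:AxBy}. For $B$, I take any $(w,z)\perp\ran M_{B,B}(1)$ and use surjectivity to represent it as $(w,z)=\operator{A}{-A}\pair{u}{v}$ with $u,v\in\D$. The orthogonality relation
\[
\sip[\bigg]{\operator{-B}{B}\pair{x}{y}}{\operator{A}{-A}\pair{u}{v}}=0,\qquad x,y\in\D,
\]
then transforms by Lemma \ref{L:AxBy} into
\[
\sip[\bigg]{\operator{-A}{A}\pair{x}{y}}{\operator{B}{-B}\pair{u}{v}}=0,
\]
and surjectivity of $\operator{-A}{A}$ forces $\operator{B}{-B}\pair{u}{v}=0$, so $u=-Bv$ and $v=Bu$. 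From $Bv=-u\in\D$ we get that $B^2v$ is legitimate and equals $-v$; taking inner product with $v$ and invoking symmetry of $B$ yields $0=\|v\|^2+\|Bv\|^2$, hence $u=v=0$ and $(w,z)=0$. Theorem \ref{C:essential} therefore delivers essential selfadjointness of $B$.

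The same scheme works for $A+B$, provided Lemma \ref{L:AxBy} is verified for the pair $(A,A+B)$. This is immediate from the hypothesis together with the symmetry of $A$:
\[
\sip{Ax}{(A+B)y}=\sip{Ax}{Ay}+\sip{Ax}{By}=\sip{Ax}{Ay}+\sip{Bx}{Ay}=\sip{(A+B)x}{Ay}.
\]
A verbatim repetition of the previous argument produces $\operator{A+B}{-(A+B)}\pair{u}{v}=0$ and then $0=\|v\|^2+\|(A+B)v\|^2$. The main subtlety in both cases is ensuring that the squared operator actually acts on $v$; this is built into the $2\times 2$ matrix formalism, since the vanishing of $\operator{B}{-B}\pair{u}{v}$ (respectively $\operator{A+B}{-(A+B)}\pair{u}{v}$) automatically places $Bv$ (respectively $(A+B)v$) in $\D$, so that the symmetry trick is applicable.
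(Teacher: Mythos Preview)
Your proof is correct and follows essentially the same route as the paper: pick $(w,z)\perp\ran M_{B,B}$, represent it via the surjectivity of $M_{-A,-A}$, transfer the orthogonality with Lemma~\ref{L:AxBy}, and use the surjectivity of $M_{A,A}$ to conclude $\operator{B}{-B}\pair{u}{v}=0$; then treat $A+B$ by checking that the pair $(A,A+B)$ again satisfies the commutation hypothesis. The only cosmetic difference is in the last step: the paper simply expands $\bigl\|\operator{B}{-B}\pair{u}{v}\bigr\|^2=\|u\|^2+\|v\|^2+\|Bu\|^2+\|Bv\|^2$ (the cross terms cancel by the symmetry of $B$), which gives $u=v=0$ directly and avoids your detour through $B^2v=-v$.
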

\begin{proof}
We prove first that $B$ essentially selfadjoint. With this aim consider $(w,z)\in\ran (M_{B,B})^{\perp}$. By selfadjointness of $A$ we can find $u,v\in\D$ such that 
\begin{equation*}
\operator{A}{-A}\pair{u}{v}=\pair{w}{z}.
\end{equation*}
Consequently, by Lemma \ref{L:AxBy}
\begin{multline*}
0=\sip[\bigg]{\operator{A}{-A}\pair{u}{v}}{\operator{-B}{B}\pair{x}{y}}\\
 =\sip[\bigg]{\operator{B}{-B}\pair{u}{v}}{\operator{-A}{A}\pair{x}{y}},
\end{multline*}
 for any $x,y\in\D$. Hence $\operator{B}{-B}\pair{u}{v}\in\ran (M_{A,A})^{\perp}=\{0\}$. We get therefore 
 \begin{align*}
  0=\left\|\operator{B}{-B}\pair{u}{v}\right\|^2=\|u\|^2+\|v\|^2+\|Bu\|^2+\|Bv\|^2,
 \end{align*}
 that yields $u=v=0$, whence $w=z=0$. Consequently, $B$ is essentially selfadjoint due to Theorem \ref{C:essential}. Observe on the other hand that 
 \begin{align*}
 \sip{Ax}{(A+B)y}=\sip{Ax}{Ay}+\sip{Bx}{Ay}=\sip{(A+B)x}{Ay}
 \end{align*}
 holds according to our hypotheses. The essential selfadjointness of $A+B$ follows therefore due to the first part of the proof, replacing $B$ by $A+B$.
\end{proof}
The essentially selfadjoint variant of the preceding theorem reads as follows:
\begin{theorem}\label{T:essentialABBA}
Let $A,B$ be symmetric operators in the real or complex Hilbert space $\hil$ such that $A$ is essentially selfadjoint and $B$ is $A$-bounded. If 
\begin{equation}\label{E:ABBA}
\sip{Ax}{By}=\sip{Bx}{Ay},\qquad \textrm{for all $x\in\dom A$}
\end{equation}
then $A+B$ and $B$ are essentially selfadjoint on $\dom A$.
\end{theorem}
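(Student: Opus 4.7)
The plan is to reduce everything to Theorem \ref{T:perturbAB} by first passing from $A,B$ to their closures and then descending again. Since $A$ is essentially selfadjoint, $A^{**}$ is selfadjoint. By the $A$-boundedness of $B$ together with the extension property recalled just before Theorem \ref{T:perturb1}, we have $\dom A^{**}\subseteq \dom B^{**}$, and $B^{**}$ is $A^{**}$-bounded with $Bx_n\to B^{**}x$ whenever $x_n\in\dom A$, $x_n\to x\in\dom A^{**}$ and $Ax_n\to A^{**}x$. Denote by $\tilde B$ the restriction of $B^{**}$ to $\dom A^{**}$; then $\tilde B$ is a densely defined symmetric operator sharing its dense domain with the selfadjoint operator $A^{**}$.

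The next step is to propagate the commutation relation \eqref{E:ABBA} from $\dom A$ up to $\dom A^{**}$. For $x,y\in\dom A^{**}$ choose sequences $x_n,y_n\in\dom A$ with $x_n\to x$, $Ax_n\to A^{**}x$ and similarly for $y_n$; by the $A$-boundedness extension above, $Bx_n\to\tilde Bx$ and $By_n\to\tilde By$. Passing to the limit in $\sip{Ax_n}{By_n}=\sip{Bx_n}{Ay_n}$ gives
\begin{equation*}
\sip{A^{**}x}{\tilde By}=\sip{\tilde Bx}{A^{**}y},\qquad x,y\in\dom A^{**}.
\end{equation*}
Theorem \ref{T:perturbAB}, applied with $A^{**}$ and $\tilde B$ on the common dense domain $\dom A^{**}$, then yields the essential selfadjointness of both $\tilde B$ and $A^{**}+\tilde B$.

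It remains to transfer these conclusions back to $A$ and $B$. The inclusions $B\subseteq\tilde B\subseteq B^{**}$ give $B^{**}\subseteq\tilde B^{**}\subseteq (B^{**})^{**}=B^{**}$, so $\tilde B^{**}=B^{**}$; since the former is selfadjoint, $B$ is essentially selfadjoint. For the sum, if $y\in\dom A^{**}$ is approximated by $x_n\in\dom A$ with $Ax_n\to A^{**}y$, then $(A+B)x_n=Ax_n+Bx_n\to (A^{**}+\tilde B)y$, so $A^{**}+\tilde B\subseteq (A+B)^{**}$. Combined with the obvious $A+B\subseteq A^{**}+\tilde B$, taking closures forces $(A+B)^{**}=(A^{**}+\tilde B)^{**}$, which is selfadjoint; hence $A+B$ is essentially selfadjoint as well.

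The main obstacle I anticipate is the extension of the commutation identity to $\dom A^{**}$, which is precisely where $A$-boundedness enters in an essential way: it is the hypothesis that guarantees $Bx_n$ converges along the same graph-approximating sequences that realise the closure of $A$, and therefore that the bilinear pairing passes to the limit. The remaining bookkeeping, namely the identification of $(A+B)^{**}$ with $(A^{**}+\tilde B)^{**}$ and of $B^{**}$ with $\tilde B^{**}$, is a routine consequence of the same $A$-boundedness extension.
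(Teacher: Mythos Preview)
Your argument is correct and follows exactly the route of the paper's own proof: pass to the closures, extend \eqref{E:ABBA} to $\dom A^{**}$ by $A$-boundedness, apply Theorem~\ref{T:perturbAB} to $A^{**}$ and $\tilde B:=B^{**}\upharpoonright\dom A^{**}$, and descend via the graph inclusions $G(A^{**}+\tilde B)\subseteq\overline{G(A+B)}$ and $G(\tilde B)\subseteq\overline{G(B\upharpoonright\dom A)}$. One small slip: the inclusion ``$B\subseteq\tilde B$'' need not hold as written, since only $\dom A\subseteq\dom B$ is assumed; what you really use (and what suffices) is $B\upharpoonright\dom A\subseteq\tilde B\subseteq(B\upharpoonright\dom A)^{**}$, which yields $(B\upharpoonright\dom A)^{**}=\tilde B^{**}$ and hence essential selfadjointness of $B$ on $\dom A$, as stated in the theorem.
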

\begin{proof}
We conclude by $A$-boundedness that $\dom A^{**}\subseteq \dom B^{**}$ and $x_n\to x$ and $Ax_n\to Ax$ imply $Bx_n\to Bx$ for $\seq{x}$ of $\dom A$ and $x$ in $\dom A^{**}$. Hence \eqref{E:ABBA} remains valid also for $x\in\dom A^{**}=:\D$ with $A,B$ replaced by $A^{**}, B^{**}$, respectively. Theorem \ref{T:perturbAB} implies therefore that $A^{**}+B^{**}$ as well as the restriction of $B^{**}$ to $\dom A^{**}$ are essentially selfadjoint. The desired statement follows from the observations
\begin{equation*}
G(A^{**}+B^{**})\subseteq \overline{G(A+B)}\qquad\textrm{and}\qquad G(B^{**}\upharpoonright\dom A^{**})\subseteq \overline{G(B)} 
\end{equation*}
which are again due to $A$-boundedness.
\end{proof}
\begin{corollary}
Let $A,B$ be symmetric operators in $\hil$ with $\D:=\dom A\subseteq\dom B$. Assume furthermore  $\D\subseteq \dom (AB)\cap\dom(BA)$, $ABx=BAx$ for all $x\in\D$, and that any of the following two conditions is satisfied:
\begin{enumerate}[\upshape a)]
\item $A$ is selfadjoint;
\item $A$ is essentially selfadjoint and $B$ is $A$-bounded.
\end{enumerate}
Then  $B$ and $A+B$ are both essentially selfadjoint on $\D$.  
\end{corollary}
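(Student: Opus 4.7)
The plan is to reduce this corollary to Theorem \ref{T:perturbAB} (under hypothesis a)) and Theorem \ref{T:essentialABBA} (under hypothesis b)). Both of those theorems have the same conclusion we want -- essential selfadjointness of $B$ and $A+B$ on the relevant domain -- but they require the symmetric commutation identity $\sipv{Ax}{By}{}=\sipv{Bx}{Ay}{}$ for $x,y$ in the appropriate domain (denoting by $\sipv{\cdot}{\cdot}{}$ the standard inner product). So the real work is to verify this identity from the pointwise commutation $ABx=BAx$ together with the domain assumption $\D\subseteq\dom(AB)\cap\dom(BA)$.

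First I would check the commutation identity. Fix $x,y\in\D$. Since $y\in\D\subseteq\dom(AB)$, we have $By\in\dom A$, so symmetry of $A$ gives
\begin{equation*}
\sip{Ax}{By}=\sip{x}{ABy}.
\end{equation*}
Since $y\in\D\subseteq\dom(BA)$, we have $Ay\in\dom B$, so symmetry of $B$ gives
\begin{equation*}
\sip{Bx}{Ay}=\sip{x}{BAy}.
\end{equation*}
Applying the hypothesis $ABy=BAy$ inside the right-hand sides equates the two, yielding $\sip{Ax}{By}=\sip{Bx}{Ay}$ for all $x,y\in\D$.

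With this identity in hand, both cases are immediate. Under a), $A$ is selfadjoint, so $\D=\dom A$ is dense and Theorem \ref{T:perturbAB} applies verbatim, delivering essential selfadjointness of $B$ and of $A+B$ on $\D$. Under b), $A$ is essentially selfadjoint and $B$ is $A$-bounded, so (since the commutation identity holds on $\dom A=\D$) Theorem \ref{T:essentialABBA} applies directly with the same conclusion.

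There is no real obstacle: the only nontrivial step is the verification of the symmetric commutation identity, and that is a short computation using symmetry of $A$ and $B$ in turn, combined with the inclusion $\D\subseteq\dom(AB)\cap\dom(BA)$ which is precisely what guarantees the intermediate vectors $By$ and $Ay$ lie in the correct domains for the symmetry moves to be legal.
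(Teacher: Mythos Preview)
Your proof is correct and follows exactly the paper's approach: the paper's one-line proof simply asserts that the hypotheses of Theorems \ref{T:perturbAB} and \ref{T:essentialABBA} are satisfied, and you have supplied the verification of the key identity $\sip{Ax}{By}=\sip{Bx}{Ay}$ that those theorems require. The only cosmetic point is that Theorem \ref{T:perturbAB} is stated for $A,B$ sharing the common domain $\D$, so in case a) one tacitly replaces $B$ by its restriction $B\upharpoonright\D$---which is still symmetric and to which your commutation argument applies unchanged.
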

\begin{proof}
It is easy to verify that $A,B$ satisfy  the conditions of Theorem \ref{T:perturbAB} and Theorem \ref{T:essentialABBA}, respectively.
\end{proof}
\begin{corollary}
Assume that $A$ is symmetric and   $A^m$ is essentially selfadjoint for some integer $m$. If $p$ is a polynomial with real coefficients and of degree $\leq m$  then $p(A)$ is essentially selfadjoint on $\dom (A^m)$.  
\end{corollary}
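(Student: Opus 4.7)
The plan is to apply Theorem \ref{T:essentialABBA} directly, taking $A^m$ (essentially selfadjoint by hypothesis) in the role of its ``$A$'' and $p(A)$, restricted to $\D:=\dom A^m$, in the role of its ``$B$''. Writing $p(t)=\sum_{k=0}^m c_k t^k$ with $c_k\in\dupR$, three things need to be checked: (a) symmetry of $p(A)$ on $\D$, (b) the commutation identity $\sip{A^m x}{p(A)y}=\sip{p(A)x}{A^m y}$ for all $x,y\in\D$, and (c) $A^m$-boundedness of $p(A)$ on $\D$. Once (a)--(c) are in place, the theorem will immediately give essential selfadjointness of $p(A)$ on $\D=\dom A^m$, which is the claim.

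Symmetry (a) is immediate from symmetry of $A$ and reality of the coefficients $c_k$. For (b), by linearity it suffices to prove $\sip{A^m x}{A^k y}=\sip{A^k x}{A^m y}$ for $x,y\in\D$ and each $0\le k\le m$. I would obtain this by iterating the ``shift'' $\sip{A^j u}{A^\ell v}=\sip{A^{j-1}u}{A^{\ell+1}v}$ (a direct consequence of symmetry of $A$) exactly $m-k$ times. Each intermediate step is legitimate because $x,y\in\dom A^m$ ensures that every iterate $A^i x$ or $A^i y$ encountered along the way lies in $\dom A$.

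The main obstacle will be (c), the $A^m$-boundedness of $p(A)$. By linearity this reduces to the estimate
\begin{equation*}
\|A^k x\|^2\le \|x\|^2+\|A^m x\|^2,\qquad x\in\D,\ 0\le k\le m.
\end{equation*}
My plan here is to invoke the Landau--Kolmogorov type interpolation inequality $\|A^k x\|^2\le \|x\|^{2(1-k/m)}\|A^m x\|^{2k/m}$ and then apply Young's inequality to split the product into a sum. The interpolation itself follows from the fact that $k\mapsto \log\|A^k x\|^2$ is midpoint-convex on $\{0,1,\ldots,m\}$: symmetry of $A$ together with Cauchy--Schwarz gives $\|A^j x\|^2=\sip{A^{j-1}x}{A^{j+1}x}\le \|A^{j-1}x\|\cdot\|A^{j+1}x\|$ for every $1\le j\le m-1$ (with the usual convention for vanishing norms), which is precisely the required convexity.
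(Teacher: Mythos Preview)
Your proposal is correct and follows exactly the paper's strategy: apply Theorem~\ref{T:essentialABBA} with $A^m$ in the role of ``$A$'' and $p(A)\upharpoonright\dom A^m$ in the role of ``$B$'', checking symmetry, the commutation identity, and $A^m$-boundedness.

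The only point where you deviate is in establishing the $A^m$-boundedness of each power $A^k$, $k\le m$. The paper proves, by induction on $n$, that $A^n$ is $A^{n+1}$-bounded: the base case is $\|Ax\|^2=\sip{A^2x}{x}\le\tfrac12\|A^2x\|^2+\tfrac12\|x\|^2$, and the inductive step combines $\|A^n x\|^2\le \tfrac{\gamma^2}{2}\|A^{n+1}x\|^2+\tfrac{1}{2\gamma^2}\|A^{n-1}x\|^2$ with the hypothesis and absorbs the middle term. Chaining these bounds then gives $A^k$ is $A^m$-bounded. You instead read the same Cauchy--Schwarz inequality $\|A^j x\|^2\le\|A^{j-1}x\|\,\|A^{j+1}x\|$ as midpoint log-convexity of $j\mapsto\|A^j x\|^2$ and pass directly to the Landau--Kolmogorov interpolation $\|A^k x\|^2\le\|x\|^{2(1-k/m)}\|A^m x\|^{2k/m}$, followed by Young's inequality. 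Both arguments rest on the identical basic estimate; your route is a bit more direct and yields the clean constant $1$ in $\|A^k x\|^2\le\|x\|^2+\|A^m x\|^2$, whereas the paper's inductive chain produces unspecified constants $\alpha',\beta'$ (which is all that is needed anyway). Your parenthetical about vanishing norms is harmless here: for symmetric $A$, if $A^j x=0$ with $j\ge1$ then $\|A^{j-1}x\|^2=\sip{A^{j-2}x}{A^j x}=0$, so by downward induction $A^k x=0$ for all $k\ge1$, and the desired estimate is trivial.
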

\begin{proof}
It is obvious that the symmetric operator $B:=p(A)$ fulfills \eqref{E:ABBA} with $A^m$ in place of $A$. To see that $p(A)$ is $A^m$-bounded it suffices to show that   $A^{n}$ is $A^{n+1}$-bounded for each integer $n< m$. We proceed by induction. The case $n=2$ follows easily from the Cauchy--Schwarz and the arithemetic-geometric mean inequalities:
\begin{align*}
\|Ax\|^2=\sip{A^2x}{x}\leq \frac12\|A^2x\|^2+\frac12\|x\|^2,\qquad \textrm{for all $x\in\dom (A^2)$}. 
\end{align*}
Suppose now $\|A^{n-1}x\|^2\leq \alpha\|A^{n}x\|^2+\beta\|x\|^2$ for certain $\alpha,\beta$ and for all $x\in\dom(A^{n})$. Choosing $\gamma>0$ with $2\gamma^2>\alpha$ we get, just as in case $n=2$,
\begin{align*}
\|A^{n}x\|^2&\leq \frac{\gamma^2}{2}\|A^{n+1}x\|^2+\frac{1}{2\gamma^2}\| A^{n-1}x\|^2\\
&\leq \frac{\gamma^2}{2}\|A^{n+1}x\|^2+\frac{\alpha}{2\gamma^2}\| A^{n}x\|^2+ \frac{\beta}{2\gamma^2}\|x\|^2
\end{align*}
for $x\in\dom (A^{n+1})$, whence, by rearrangement we conclude that $$\|A^{n}x\|^2\leq \alpha'\|A^{n+1}x\|^2+\beta'\|x\|^2$$ for suitable $\alpha',\beta'.$ Theorem \ref{T:essentialABBA} completes then the proof.
\end{proof}
We conclude the paper with a result \cite[Corollary 9.2]{Nelson} due to Nelson. We also mention that Nelson's original result concerned only with complex Hilbert spaces.
\begin{corollary}\label{C:Nelson}
Let $A,B$ be symmetric operators in the real or complex Hilbert space $\hil$. Let $\D$ be a dense subset of $\hil$ such that $\D$ is contained in the domain of $A,B, AB, BA, A^2$ and $B^2$ and such that $ABx=BAx$ for all $x\in\D$. If the restriction of $A^2+B^2$ to $\D$ is essentially selfadjoint then the restrictions of $A$ and $B$ to $\D$ are essentially selfadjoint as well.  
\end{corollary}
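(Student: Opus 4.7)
Set $S:=(A^2+B^2)\upharpoonright\D$, which is symmetric with $\dom S=\D$, and essentially selfadjoint by hypothesis. The plan is to apply Theorem~\ref{T:essentialABBA} with $S$ in the role of the essentially selfadjoint operator and $A\upharpoonright\D$ in the role of the perturbation: this will yield the essential selfadjointness of $A\upharpoonright\D$, and the symmetric argument (swapping $A$ and $B$) takes care of $B\upharpoonright\D$. Two things must be checked: (a) $A\upharpoonright\D$ is $S$-bounded; (b) the commutation identity $\sip{Sx}{Ay}=\sip{Ax}{Sy}$ holds for all $x,y\in\D$.

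For (a) I use that $\D\subseteq\dom(A^2)\cap\dom(B^2)$ together with symmetry of $A,B$ to get
\begin{equation*}
\sip{Sx}{x}=\sip{A^2x}{x}+\sip{B^2x}{x}=\|Ax\|^2+\|Bx\|^2,\qquad x\in\D,
\end{equation*}
and then $\|Ax\|^2\leq \|Sx\|\|x\|\leq \tfrac12\|Sx\|^2+\tfrac12\|x\|^2$, which is $S$-boundedness of $A$ (and of $B$) with bound less than one.

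The content of the plan is (b). Split $\sip{Sx}{Ay}=\sip{A^2x}{Ay}+\sip{B^2x}{Ay}$. The $A$-term equals $\sip{Ax}{A^2y}$ by two applications of symmetry of $A$, using $Ax,Ay\in\dom A$. For the $B$-term, I chain the following identities, using at each step the domain inclusions $\D\subseteq\dom(AB)\cap\dom(BA)$ and the commutation $ABz=BAz$ on $\D$:
\begin{equation*}
\sip{B^2x}{Ay}=\sip{Bx}{BAy}=\sip{Bx}{ABy}=\sip{ABx}{By}=\sip{BAx}{By}=\sip{Ax}{B^2y}.
\end{equation*}
Here the first equality uses $Bx\in\dom B$ and $Ay\in\dom B$ (the latter from $\D\subseteq\dom BA$); the second uses commutation on $y\in\D$; the third uses $Bx\in\dom A$ (from $\D\subseteq\dom AB$) and $By\in\dom A$; the fourth uses commutation on $x\in\D$; and the fifth uses $Ax\in\dom B$ and $By\in\dom B$. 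Adding the two pieces gives $\sip{Sx}{Ay}=\sip{Ax}{Sy}$.

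With (a) and (b) in hand, Theorem~\ref{T:essentialABBA} (applied to the essentially selfadjoint $S$ and the $S$-bounded symmetric $A\upharpoonright\D$) concludes that $A\upharpoonright\D$ is essentially selfadjoint. Interchanging the roles of $A$ and $B$ gives the same for $B\upharpoonright\D$. The main obstacle is step (b): one must pass inner products back and forth through $A$ and $B$ five times, and at each step both arguments have to lie in the appropriate domain. The hypotheses $\D\subseteq \dom(AB)\cap\dom(BA)\cap\dom(A^2)\cap\dom(B^2)$ are precisely what is needed to legitimize every symmetrization in the chain above, so once the chain is written out there is nothing left to do.
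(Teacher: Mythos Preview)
Your proof is correct and follows the same route as the paper: both set $C:=(A^2+B^2)\!\upharpoonright\!\D$, verify the commutation relation $\sip{Cx}{Ay}=\sip{Ax}{Cy}$ (and likewise for $B$), establish $C$-boundedness of $A$ and $B$ via $\|Ax\|^2+\|Bx\|^2=\sip{Cx}{x}\leq\tfrac12(\|Cx\|^2+\|x\|^2)$, and then invoke Theorem~\ref{T:essentialABBA}. The only difference is that you spell out the domain checks in the five-step chain for $\sip{B^2x}{Ay}=\sip{Ax}{B^2y}$, whereas the paper simply asserts the commutation identity.
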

\begin{proof}
Setting $C:=(A^2+B^2)\upharpoonright\D$ we conclude that 
\begin{align*}
\sip{Ax}{Cy}=\sip{Cx}{Ay}\qquad \textrm{and}\qquad \sip{Bx}{Cy}=\sip{Cx}{By}
\end{align*}
for all $x,y\in\D$. On the other hand,  both $A,B$ are $C$-bounded according to the following estimation:
\begin{align*}
\|Ax\|^2+\|Bx\|^2=\sip{Cx}{x}\leq \frac12\big(\|Cx\|^2+\|x\|^2\big),\qquad x\in\D.
\end{align*}
Theorem \ref{T:essentialABBA} therefore applies. 
\end{proof}
\begin{remark}
Note that  the assumptions of Nelson's result above yield essential selfadjointness for $aA+bB$ for any $a,b\in \dupR$. To see this we can repeat the arguments of the preceding proof: 
\begin{align*}
\sip{(aA+bB)x}{Cy}=\sip{Cx}{(aA+bB)y},\qquad x,y\in\D,
\end{align*}
and $aA+bB$ is $C$-bounded since $A$ and $B$ are so.
\end{remark}
\bibliographystyle{abbrv}

\end{document}